\documentclass[UTF-8,reqno,12pt]{amsart}
\usepackage{enumerate,amsmath,amssymb,amsthm}
\usepackage{amssymb,url,color}
\usepackage{hyperref}
\usepackage{amsmath,amsthm}
\usepackage{mathrsfs}
\usepackage[notref,notcite]{showkeys}
\usepackage{bm}
\usepackage{graphicx}
\usepackage{epstopdf}
\usepackage{marvosym}
\allowdisplaybreaks[4]

\newtheorem{thm}{Theorem}[section]
\theoremstyle{Condition}

\newtheorem{lem}{Lemma}[section]
\newtheorem{rem}{Remark}[section]
\newtheorem{exa}{Example}[section]
\theoremstyle{Problem}

\theoremstyle{Assumption}

\theoremstyle{Definition}

\numberwithin{equation}{section}

\def\beq{\begin{equation}}
\def\deq{\end{equation}}
\allowdisplaybreaks 

\bfseries\rmfamily 
\def\cA{{\mathcal A}}
\def\cB{{\mathcal B}}
\def\cC{{\mathcal C}}

\def\cF{{\mathcal F}}

\def\cL{{\mathcal L}}

\def\cP{{\mathcal P}}

\def\cT{{\mathcal T}}

\def\cW{{\mathcal W}}

\def\mE{{\mathbb E}}

\def\mN{{\mathbb N}}

\def\mP{{\mathbb P}}

\def\mR{{\mathbb R}}

\def\geq{\geqslant}
\def\leq{\leqslant}

\def\a{\alpha}

\def\b{\beta}

\def\d{\delta}

\def\s{\sigma}

\def\[{{\Big[}}
\def\]{{\Big]}}
\def\<{{\langle}}
\def\>{{\rangle}}
\def\({{\Big(}}
\def\){{\Big)}}

\def\dif{{\rm d}}

\def\={&\!\!=\!\!&}
\def\bt{\begin{theorem}}
\def\et{\end{theorem}}
\def\bl{\begin{lemma}}
\def\el{\end{lemma}}
\def\br{\begin{rem}}
\def\er{\end{rem}}

\begin{document}

\title[EM method for invariant measures of McKean-Vlasov SDEs]
{The Euler-Maruyama method for invariant measures of McKean-Vlasov stochastic differential equations}

\author{Zhen Wang$^{1}$, Mingyan Wu$^{2}$}

\thanks{\Letter \ \  Zhen Wang\\
  \ \  E-mail addresses$:$ wangzhen881025@163.com}
\thanks{ \ \  Mingyan Wu\\
  \ \  E-mail addresses$:$ mingyanwu@hust.edu.cn ; mingyanwu.math@gmail.com}
\thanks{$^{1}$ School of Mathematics and Statistics (School of Grytology), Henan Normal University, Xinxiang, Henan, $453007$, P. R. China. }
\thanks{$^{2}$ School of Mathematical sciences, Xiamen University, Xiamen, Fujian, $361005$, P. R. China. }
\thanks{This work is partially supported by National Natural Science Foundation of China (No.12501192).}
\thanks{This work is partially supported by Natural Science Foundation of Henan Province in China (No.242300420646).}

\begin{abstract}
\ \ This paper investigates the approximation of invariant measures for McKean-Vlasov stochastic differential equations (SDEs) using the Euler-Maruyama (EM) scheme under a monotonicity condition. Firstly, the convergence of the numerical solution from the EM scheme to its continuous-time counterpart is established. Secondly, we show that the numerical solution admits a unique invariant measure and derive its convergence rate under the Wasserstein metric. In parallel, it is demonstrated that the associated particle system also possesses these properties.
 \\
 Keywords.  \ \ McKean-Vlasov SDEs, Euler-Maruyama scheme, Invariant measure, Monotonicity coefficients.\\
{\it Mathematics Subject Classification:} 60H35, 60H05, 60H10.\\
\end{abstract}

\maketitle

\section{Introduction}
  A McKean-Vlasov process is defined by stochastic differential equations whose coefficients depend on the law of the solution itself, providing a mathematical formulation of the propagation of chaos in mean-field interacting particle systems (cf. \cite{K,M,S,SKP}). This framework includes important models such as the Vlasov-Fokker-Planck equation \cite{P} and various mean-field interacting systems \cite{RF, RF2}.

  The study of invariant measures is fundamental to understanding the long-time behavior of stochastic processes, and is particularly relevant in the context of McKean-Vlasov SDEs.
 Under a monotonicity condition, Wang \cite{W} established the unique invariant measure for McKean-Vlasov SDEs and proved exponential convergence. Subsequently,
the weak well-posedness, along with the existence and uniqueness of an invariant probability measure, was investigated for McKean-Vlasov SDEs with integrable drift in \cite{HWY}. In a related direction, the authors of \cite{BSY} demonstrated the existence of an invariant probability measure for a class of functional
 McKean-Vlasov SDEs employing Kakutani's fixed point theorem to an appropriate space of probability measures on continuous functions. Using non-symmetric singular granular media equations, \cite{W1} studied exponential ergodicity in relative entropy and (weighted) Wasserstein distances for reflecting McKean-Vlasov SDEs. Finally, Zhang \cite{Zh} provided conditions under which McKean-Vlasov SDEs may admit more than one invariant measure.

The approximation of invariant measures for McKean-Vlasov SDEs has been explored from the perspective of the process itself.  Specifically, Du et al. \cite{DJL} demonstrated that the empirical measures of the solution process converge to its invariant measure and quantified the rate in terms of the Wasserstein distance. Subsequently, Cao and Du \cite{CD} further explored this problem for dynamics with non-degenerate additive noise, establishing convergence results in the Wasserstein metric.

The general intractability of analytical solutions and invariant measures for McKean-Vlasov SDEs in practical settings compels the adoption of numerical methods. This paper investigates the approximation of the invariant measure of McKean-Vlasov SDEs via the EM scheme under a monotonicity condition.

Considering the following McKean-Vlasov SDE:
\beq\label{a}
X_t=X_0+\int_0^tb(X_s,\cL_{X_s})\dif s+\int_0^t\s(X_s,\cL_{X_s})\dif W_s, \ \ t\in[0,T],
\deq
with the initial value $X_0=x$. Here, $\cL_{X_t}$ denotes the law of $X_t$ and $W_t$ is a standard $d$-dimensional Wiener process defined on a complete filtered probability space $(\Omega, \cF,\cF_t,\mP)$.

To investigate the EM scheme of the invariant measure for McKean-Vlasov SDEs, we introduce an equidistant temporal partition. For a step size $h\in(0,1)$, the grid is defined by $\mathcal{T}^h= \{kh : k \in \mN\}$, which covers the finite time horizon $[0,T]$.  The EM scheme for simulating the solution to (\ref{a}) is then given by the recursive relation on this grid $\mathcal{T}^h$:
\beq\label{a-2}
\hat{X}_{(k+1)h}=\hat{X}_{kh}+hb(\hat{X}_{kh},\cL_{\hat{X}_{kh}})+\s(\hat{X}_{kh},\cL_{\hat{X}_{kh}})\Delta W_{(k+1)h},
\deq
where the initial value $\hat{X}_0=x$ and $\Delta W_{(k+1)h}:=W_{(k+1)h}-W_{kh}$ is the Brownian increment, satisfying
$\mE[|\Delta W_{(k+1)h}|^2]=h$.

Furthermore, we consider a system of $N$ interacting particles that approximates \eqref{a} in the mean-field sense. For $j = 1,2,\dots,N$, the dynamics are given by
\beq\label{b}
 X_{t}^{N,j}=X_0^{N,j}+\int_0^tb\left(X_{s}^{N,j},\mu_{s}^N\right)\dif s+\int_0^t\s\left(X_{s}^{N,j},\mu_{s}^N\right)\dif W_s^j,\ \ X_0^{N,j}=X_0,
\deq
where $\mu_t^N$ is the empirical measure of $\left\{X_t^{N,i},i=1,2,\cdots,N\right\}$, defined as
$$
\mu_t^N:=\frac{1}{N}\sum_{i=1}^N\d_{X_t^{N,i}},
$$
 with $\d_x$ the Dirac measure at $x$. Subsequently, the EM scheme to $N$ interacting particles (\ref{b}) on the grid $\mathcal{T}^h$ is:
\beq\label{b-2}
\hat{X}_{(k+1)h}^{N,j,h}=\hat{X}_{kh}^{N,j,h}+hb\left(\hat{X}_{kh}^{N,j,h},\frac{1}{N}\sum_{i=1}^N\d_{\hat{X}_{kh}^{N,i,h}}\right)
+\s\left(\hat{X}_{kh}^{N,j,h},\frac{1}{N}\sum_{i=1}^N\d_{\hat{X}_{kh}^{N,i,h}}\right)\Delta W_{(k+1)h}.
\deq
As the number of particles $N$ tends to infinity, the interacting particle system exhibits propagation of chaos. Consequently, the discretized system's behavior converges to that of a non-interacting particle system, governed by the following equations:
\beq\label{c}
 X_{t}^{j}=X_0^j+\int_0^tb\left(X_{s}^{j},\cL_{X_{s}^{j}}\right)\dif s+\s\left(X_{s}^{j},\cL_{X_{s}^{j}}\right)\dif W_s^j,\ \ X_0^j=X_0.
\deq
Since the particles $X_{t}^j$ are independent, it follows that $\cL_{X_{t}^j}=\cL_{X_{t}}$, for every $j=1,2,\cdots,N$. The corresponding EM scheme for (\ref{c}) is given by:
\beq\label{c-2}
 \hat{X}_{(k+1)h}^{j,h}=\hat{X}_{kh}^{j,h}+hb\left(\hat{X}_{kh}^{j,h},\cL_{\hat{X}_{kh}^{j,h}}\right)+\s\left(\hat{X}_{kh}^{j,h},\cL_{\hat{X}_{kh}^{j,h}}\right)\Delta W_{(k+1)h}.
\deq

This work establishes the convergence of the EM scheme for approximating invariant measures of McKean-Vlasov SDEs under a monotonicity condition. We prove that the EM scheme converges to its continuous-time counterpart and admits a unique invariant measure at a quantitative rate in the Wasserstein distance. These results are further extended to the associated interacting particle system.

The paper is organized as follows. Section 2 introduces the mathematical preliminaries and states the main results. Section 3 is devoted to two key results: the convergence of the EM scheme to its continuous-time counterpart, and the existence and uniqueness of an invariant measure for the numerical solution. Section 4 extends the analysis to the interacting particle system, providing a detailed study of its properties.  Finally, numerical simulations are displayed in Section 5.
\section{Mathematical preliminaries and main results}
\subsection{Mathematical preliminaries}
In this section, we introduce notations and mathematical preliminaries used throughout the paper.

Let $|\cdot|$ denote the Frobenius norm on matrix spaces and $\cB(\mR^d)$ the family of Borel sets in $\mR^d$. For real numbers $a$ and $b$, $a \vee b$ and $a \wedge b$ denote the larger and smaller values, respectively. The inequality $a \lesssim b$ means that there exists a universal constant $C > 0$, independent of the key parameters, such that $a \leq C b$. For a matrix $B$, $\|B\|_{\mathrm{HS}}$ means its Hilbert-schmidt norm. Let $\cP(\mR^d)$  denote the space of probability measures on $\mR^d$, equipped with the 2-Wasserstein distance $\cW_2$. Within this space, $\cP_p$ represents the subset of measures with finite $p$-th moment, meaning for any $\mu \in \cP_p(\mR^d)$, we have $\mu(|\cdot|^p) := \int_{\mR^d} |x|^p \mu(dx) < \infty$.

For any $\mu, \nu \in \cP_p(\mR^d)$ with $p > 0$, the $\cW_p$-Wasserstein distance is defined by
\beq\label{2-a}
\cW_p(\mu,\nu)=\inf_{\pi\in\cC(\mu,\nu)}\left(\int_{\mR^d\times \mR^d}|x-y|^p\pi(\dif x,\dif y)\right)^{\frac{1}{1\vee p}},
\deq
where $\cC(\mu, \nu)$ is the set of all couplings of $\mu$ and $\nu$. A measure $\pi$ on $\mR^d \times \mR^d$ belongs to $\cC(\mu, \nu)$ if and only if $\pi( \cdot, \mR^d) = \mu(\cdot)$ and $\pi(\mR^d, \cdot) = \nu(\cdot)$.

For any $t\in\mR_+$ and any Borel set $B\in\cB(\mR^d)$, we denote by $\mP_{t}(x, B)$ the transition probability kernel of the process $X_{t}$. A probability measure $\pi \in \cP(\mR^d)$ is called an invariant measure for $X_{t}$ if it satisfies
\beq\label{3}
\pi(B)=\int_{\mR^d}\mP_{t}(x,B)\pi(\dif x),
\deq
 for any $B\in\cB(\mR^d)$.

Finally, we denote by $C$ (with or without subscripts) a generic positive constant whose value may vary from line to line and depends only on the indicated parameters.
\subsection{Main results}
Regarding the coefficients $b$ and $\s$, we make the following assumptions:\\
$\mathbf{Assumption}$ $\mathbf{1.}$ The coefficients $b(\cdot,\cdot)$ and $\s(\cdot,\cdot)$ are continuous, respectively defined on $\mR^d\times\cP_2$. \\
$\mathbf{Assumption}$ $\mathbf{2.}$ There are constants $\a>\beta\geq 0$, $\gamma>\kappa\geq0$, and $\rho>0$ such that for all $x,y\in\mR^d$ and $\mu,\nu\in\cP_2$,
$$
2\langle b(x,\mu)-b(y,\nu),x-y\rangle+\|\s(x,\mu)-\s(y,\nu)\|_{\mathrm{HS}}^2\leq -\a|x-y|^2+\beta \cW_2(\mu,\nu)^2
$$
and
$$
2\langle b(x,\mu),x\rangle+(1+\rho)\|\s(x,\mu)\|_{\mathrm{HS}}^2\leq -\gamma|x|^2+\kappa(1+\rho+\mu(|\cdot|^2)).
$$
$\mathbf{Assumption}$ $\mathbf{3.}$ For any $x,y\in\mR^d$ and $\mu,\nu\in\cP_2$, there exist positive constants $c_0$, $a$ and $b$ such that
$$
| b(x,\mu)|^2\vee \|\s(x,\mu)\|_{\mathrm{HS}}^2\leq c_0(1+|x|^2+\mu(|\cdot|^2))
$$
and
$$
|b(x,\mu)-b(y,\nu)|^2\vee \|\s(x,\mu)-\s(y,\nu)\|_{\mathrm{HS}}^2\leq a|x-y|^2+b \cW_2(\mu,\nu)^2.
$$
$\mathbf{Assumption}$ $\mathbf{4.}$ The constants satisfy $a+b<\alpha-\beta$.
\begin{rem}Under the above assumptions, it is established in \cite{W} that the solution to equation (\ref{a}) admits a unique invariant measure. Moreover, the solution converges exponentially to this invariant measure in the Wasserstein metric.
\end{rem}
\begin{rem} $\mathbf{Assumption}$ $\mathbf{2}$ provides a sufficient but not necessary condition for the existence and uniqueness of invariant measures for (\ref{a}). Nevertheless, it is sharp in the sense that the condition $\a>\beta$ cannot be relaxed. This is supported by counterexamples provided in \cite{DJL} and \cite{Zh}.
\end{rem}

Based on the above assumptions, we establish the following convergence results for the EM scheme, which is consistent with the classical theory for SDEs (see \cite{LMW, We}).
\begin{thm}\label{thm-1}Suppose that $\mathbf{Assumptions}$ $\mathbf{1}$-$\mathbf{3}$ hold and that $\mE|X_0|^2<\infty$. Then the EM scheme (\ref{a-2}) for (\ref{a}) satisfies
$$
\sup_{k=1,2,\cdots,\lfloor\frac{T}{h}\rfloor}\mE\left|X_{kh}-\hat{X}_{kh}\right|^2\preceq h.
$$
\end{thm}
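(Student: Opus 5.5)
We introduce the continuous-time interpolation of the scheme (\ref{a-2}). For $t\in[kh,(k+1)h)$ set $\eta(t):=kh$ and
$$
\hat X_t=x+\int_0^t b(\hat X_{\eta(s)},\cL_{\hat X_{\eta(s)}})\dif s+\int_0^t \s(\hat X_{\eta(s)},\cL_{\hat X_{\eta(s)}})\dif W_s .
$$
This process agrees with (\ref{a-2}) on the grid $\mathcal T^h$. Define $e_t:=X_t-\hat X_t$. The whole argument rests on one observation: since $X_t$ and $\hat X_t$ live on the same probability space, their joint law is a coupling of their marginals, so
$$
\cW_2(\cL_{X_t},\cL_{\hat X_t})^2\leq \mE|e_t|^2,\qquad \cW_2(\cL_{\hat X_t},\cL_{\hat X_{\eta(t)}})^2\leq \mE|\hat X_t-\hat X_{\eta(t)}|^2 .
$$
These inequalities let us turn every measure argument into a moment of a pathwise difference.

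The first step is a set of a priori bounds. We show $\sup_{t\le T}\mE|X_t|^2\le C$ and $\sup_{t\le T}\mE|\hat X_t|^2\le C$. Applying Itô's formula together with the linear growth in Assumption 3 (so $\mu(|\cdot|^2)=\mE|\hat X_{\eta(s)}|^2$) gives
$$
\mE|\hat X_t|^2\le C+C\int_0^t\sup_{r\le s}\mE|\hat X_r|^2\dif s,
$$
and Gronwall's inequality yields a constant depending on $T$ and $\mE|X_0|^2$ but not on $h$. The bound for $X_t$ is known from \cite{W}. As a consequence, linear growth and the Itô isometry give the one-step regularity estimate
$$
\mE|\hat X_t-\hat X_{\eta(t)}|^2\le 2c_0h^2(1+2C)+2c_0h(1+2C)\le Ch .
$$

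The second step is the error equation. We apply Itô's formula to $|e_t|^2$ and split each coefficient difference as, for instance,
$$
b(X_s,\cL_{X_s})-b(\hat X_{\eta(s)},\cL_{\hat X_{\eta(s)}})=\big[b(X_s,\cL_{X_s})-b(\hat X_s,\cL_{\hat X_s})\big]+\big[b(\hat X_s,\cL_{\hat X_s})-b(\hat X_{\eta(s)},\cL_{\hat X_{\eta(s)}})\big],
$$
and similarly for $\s$. For the diffusion term we use $\|A+B\|^2\le(1+\epsilon)\|A\|^2+(1+\epsilon^{-1})\|B\|^2$. The first brackets are controlled by the monotonicity in Assumption 2 (or simply by the Lipschitz bound in Assumption 3), giving a contribution of at most $C\mE|e_s|^2$ after using the coupling bound. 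The second brackets are bounded by Assumption 3, Young's inequality and the coupling bound, giving a contribution of at most
$$
C\mE|e_s|^2+C\big(a\,\mE|\hat X_s-\hat X_{\eta(s)}|^2+b\,\cW_2(\cL_{\hat X_s},\cL_{\hat X_{\eta(s)}})^2\big)\le C\mE|e_s|^2+Ch .
$$
The stochastic integral has zero expectation, because the moment bounds and linear growth make it a true martingale. Combining, we obtain
$$
\mE|e_t|^2\le C\int_0^t\mE|e_s|^2\dif s+CTh ,
$$
and Gronwall's inequality gives $\sup_{t\le T}\mE|e_t|^2\le Ce^{CT}h$. Restricting to the grid points $t=kh$ proves the theorem.

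The main obstacle is the measure dependence, which this plan handles through the coupling inequalities above. A second point needing care is that the one-step regularity constant must be uniform in $h$. This requires the moment bound for $\hat X$ to be uniform in $h$, which is why the Gronwall argument in the first step must be carried out directly in terms of $\sup_{r\le s}\mE|\hat X_r|^2$.
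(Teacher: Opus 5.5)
Your argument is correct, but it is organized differently from the paper's proof.

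\textbf{The paper's route.} The paper never leaves the grid. It writes the one-step identity for $e_{kh}:=X_{kh}-\hat X_{kh}$. It then splits $b(X_s,\cL_{X_s})-b(\hat X_{kh},\cL_{\hat X_{kh}})$ through the \emph{exact} solution at the left grid point, i.e.\ into $b(X_s,\cL_{X_s})-b(X_{kh},\cL_{X_{kh}})$ plus $b(X_{kh},\cL_{X_{kh}})-b(\hat X_{kh},\cL_{\hat X_{kh}})$. The first piece is controlled by the time regularity $\mE|X_s-X_t|^2\leq C|t-s|$ of the true solution (Lemma 3.3). This gives the recursion $\mE|e_{(k+1)h}|^2\leq(1+C_4h)\mE|e_{kh}|^2+C_5h^2$, closed by a discrete Gronwall argument.

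\textbf{Your route.} You interpolate the scheme, apply Itô's formula to $|e_t|^2$, and split through $\hat X_s$. The regularity you need is therefore that of the \emph{scheme}, which is why you must first prove a second-moment bound for $\hat X$ uniform in $h$. You obtain that bound from linear growth alone on $[0,T]$, without the dissipativity of Assumption 2 or the restriction $h<h^*$ of Lemma 3.1.

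\textbf{What each buys.} Your approach gives the error bound for every $t\in[0,T]$, not only on $\mathcal{T}^h$. Itô's formula also handles all cross terms automatically. By contrast, the paper's expansion of the square drops the drift--noise cross term $2\mE\langle\int_{kh}^{(k+1)h}\Delta b_s\,\dif s,\int_{kh}^{(k+1)h}\Delta\s_s\,\dif W_s\rangle$, where $\Delta b_s:=b(X_s,\cL_{X_s})-b(\hat X_{\eta(s)},\cL_{\hat X_{\eta(s)}})$ and $\Delta\s_s$ is defined analogously. That term is harmless by Young's inequality but not zero. The paper's approach is purely discrete and needs no a priori estimate on the scheme, only Lemma 3.3 and the Lipschitz bound.

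\textbf{A technical point in your version.} Bounds on $\sup_{t\leq T}\mE|\cdot|^2$ alone do not make $\int_0^t\langle e_s,\Delta\s_s\,\dif W_s\rangle$ a true martingale; the square-integrable route would need fourth moments. You can fix this in either of two ways. One is to use $\mE\sup_{t\leq T}(|X_t|^2+|\hat X_t|^2)<\infty$ together with BDG to get $\mE\sup_t|M_t|<\infty$. The other is to localize at $\tau_R=\inf\{t:|e_t|\geq R\}$, bound the $\dif t$-integrand by its nonnegative majorant, and let $R\to\infty$ using Fatou's lemma.
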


\begin{thm}\label{thm-2} Let $\mathbf{Assumptions}$ $\mathbf{1}$-$\mathbf{4}$ hold. Define $h^{\sharp} := h^* \wedge h^{**}$. Then for any step size $h \in (0, h^{\sharp})$, the following assertions hold:\\
$\mathbf{(1)}$ The EM scheme (\ref{a-2}) admits a unique invariant measure $\hat{\pi}_h \in \cP(\mR^d)$.  Moreover, the numerical solution converges exponentially to $\hat{\pi}_h$ in the Wasserstein distance on $\cT^h$.\\
$\mathbf{(2)}$ The approximation error between the numerical invariant measure and the true invariant measure satisfies
    $$
    \cW_2(\hat{\pi}_h, \pi) \lesssim h^{\frac{1}{2}},
    $$
    where the implicit constant is independent of $h$ and $\pi$ is the unique invariant measure for (\ref{a}).
\end{thm}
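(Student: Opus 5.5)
The plan is to treat the EM scheme (\ref{a-2}) as a nonlinear Markov chain on $\cP_2(\mR^d)$. Let $\hat P^*_{k}\mu$ be the law of $\hat X_{kh}$ started from $\hat X_0\sim\mu$. Because the measure argument is frozen at each step, $\hat P^*_{k}\mu=(\hat P^*_1)^k\mu$. An invariant measure is then a fixed point of $\hat P^*_1$, and I would obtain it from a contraction of $\hat P^*_1$ in $\cW_2$.

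\textbf{Step 1 (one-step contraction).} Take two copies $\hat X_{kh},\hat Y_{kh}$ driven by the same Brownian increment, with initial laws $\mu,\nu$ coupled optimally. Write $e_k=\hat X_{kh}-\hat Y_{kh}$ and $\mu_k=\cL_{\hat X_{kh}}$, $\nu_k=\cL_{\hat Y_{kh}}$. Expanding the square and using that $\Delta W_{(k+1)h}$ is independent of $\cF_{kh}$,
$$
\mE|e_{k+1}|^2=\mE|e_k|^2+h\,\mE\big[2\langle \Delta b,e_k\rangle+\|\Delta\s\|_{\mathrm{HS}}^2\big]+h^2\mE|\Delta b|^2 .
$$
Assumption 2 bounds the bracket by $-\a\mE|e_k|^2+\beta\cW_2(\mu_k,\nu_k)^2$, and Assumption 3 bounds the $h^2$ term by $h^2(a\mE|e_k|^2+b\cW_2^2)$. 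Since $\cW_2(\mu_k,\nu_k)^2\le \mE|e_k|^2$,
$$
\mE|e_{k+1}|^2\le \big(1-(\a-\beta)h+(a+b)h^2\big)\mE|e_k|^2 .
$$
For $h<h^*:=(\a-\beta)/(2(a+b))$ (or a similar threshold) the factor is at most $1-\lambda h$ with $\lambda>0$. Hence $\cW_2(\hat P^*_k\mu,\hat P^*_k\nu)^2\le (1-\lambda h)^k\cW_2(\mu,\nu)^2$. Assumption 4 ensures that $h^*$ is a reasonable threshold compatible with the invariant-measure estimate of \cite{W}.

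\textbf{Step 2 (uniform moments and existence).} Apply the same expansion to $\mE|\hat X_{kh}|^2$ using the second inequality in Assumption 2 and the growth bound in Assumption 3. For $h<h^{**}$ this gives
$$
\mE|\hat X_{(k+1)h}|^2\le (1-\lambda' h)\mE|\hat X_{kh}|^2+Ch .
$$
Here the measure term $\kappa\mu(|\cdot|^2)=\kappa\mE|\hat X_{kh}|^2$ is absorbed because $\gamma>\kappa$, and the $h^2c_0(\cdots)$ term is absorbed for small $h$. So the moments are bounded uniformly in $k$. Since $\hat P^*_1$ is a strict contraction on the complete metric space $(\cP_2,\cW_2)$, the Banach fixed point theorem gives a unique fixed point $\hat\pi_h$, and the moment bound shows $\hat\pi_h\in\cP_2$. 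Applying Step 1 with $\nu=\hat\pi_h$ then gives
$$
\cW_2(\hat P^*_k\mu,\hat\pi_h)\le (1-\lambda h)^{k/2}\cW_2(\mu,\hat\pi_h)\le e^{-\lambda kh/2}\cW_2(\mu,\hat\pi_h),
$$
which is exponential convergence on $\cT^h$. This proves (1).

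\textbf{Step 3 (error bound).} Start both the exact solution and the scheme from $X_0\sim\pi$, and let $k=\lfloor T/h\rfloor$. Then
$$
\cW_2(\hat\pi_h,\pi)\le \cW_2\big(\hat\pi_h,\hat P^*_k\pi\big)+\cW_2\big(\hat P^*_k\pi,\pi\big)\le e^{-\lambda T/2}\cW_2(\hat\pi_h,\pi)+\big(\mE|\hat X_{kh}-X_{kh}|^2\big)^{1/2},
$$
using $\cL_{X_{kh}}=\pi$. By Theorem \ref{thm-1} the last term is at most $C_T h^{1/2}$. Fixing a $T$ with $e^{-\lambda T/2}\le 1/2$ gives $\cW_2(\hat\pi_h,\pi)\le 2C_Th^{1/2}$.

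\textbf{Main obstacle.} The delicate point is that the constant in Theorem \ref{thm-1} must not depend on $h$ and must depend on the initial law only through its second moment, which is finite for $\pi$ by \cite{W}. Because $T$ is fixed independently of $h$, $C_T$ is a legitimate constant and no uniform-in-time strong error estimate is needed. If $\lambda$ depended badly on $h$ this would fail, but Step 1 gives $\lambda$ uniform for $h<h^{\sharp}$, so the argument closes.
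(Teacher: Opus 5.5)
Your proof is correct, but it takes a genuinely different route from the paper. The paper treats (\ref{a-2}) like a classical Markov chain with kernel $\hat{\mP}_{kh}(x,\cdot)$:
\begin{itemize}
\item it extracts $\hat\pi_h$ as a weak subsequential limit of the tight family $\{\delta_x\hat{\mP}_{kh}\}$;
\item it derives $\cW_2(\delta_x\hat{\mP}_{kh},\hat\pi_h)^2\leq C_6e^{-\xi_1hk}$ from Chapman--Kolmogorov plus the contraction of Lemma \ref{lem-2} for deterministic initial points;
\item for part (2), it chooses $k$ so large that both exponential tails are $\leq Ch$, and then applies Theorem \ref{thm-1} at that same $k$.
\end{itemize}
You instead prove that the nonlinear one-step map $\hat P^*_1$ contracts $\cW_2$ for arbitrary initial laws. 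This gives existence, uniqueness and exponential convergence at once from Banach's theorem on $(\cP_2,\cW_2)$. For part (2) you start both dynamics from $\pi$ on a horizon $T$ fixed independently of $h$, and absorb $\cW_2(\hat\pi_h,\hat P^*_k\pi)\leq\frac12\cW_2(\hat\pi_h,\pi)$ into the left-hand side.

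The paper's route is the standard one for SDEs without law dependence; yours is adapted to the McKean--Vlasov setting, and it gains two things.
\begin{enumerate}
\item The paper relies on the Chapman--Kolmogorov identity $\delta_x\hat{\mP}_{(k+l)h}=\int\delta_y\hat{\mP}_{kh}\,\hat{\mP}_{lh}(x,\dif y)$ and the resulting convexity bound for $\cW_2^2$. Both hold for linear kernels, but not for a chain whose coefficients depend on the law of the whole ensemble. Your fixed-point formulation never needs them.
\item You only use Theorem \ref{thm-1} on a fixed horizon. In the paper's part (2), forcing $e^{-\xi_1hk}\leq h$ requires $kh\gtrsim\log(1/h)$. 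The discrete Gronwall constant $e^{C_4kh}$ of Theorem \ref{thm-1} then behaves like a negative power of $h$, so the claimed uniformity of the constant in $h$ is not justified there. Your absorption step avoids needing any uniform-in-time strong error bound.
\end{enumerate}

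Two minor corrections. First, $k=\lfloor T/h\rfloor$ only gives $kh\in(T-h,T]$, so the contraction factor is $e^{-\lambda kh/2}\leq e^{-\lambda(T-1)/2}$; fix $T$ with $e^{-\lambda(T-1)/2}\leq\frac12$. Second, Assumption 4 plays no role in your Step 1; in the paper it only keeps $\xi_1=\alpha-\beta-(a+b)h^{**}$ positive for every $h^{**}<1$. Likewise, the uniform-in-$k$ moment bound of Step 2 is superfluous for Banach's theorem, which only needs $\hat P^*_1$ to map $\cP_2$ into itself.
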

Furthermore, for the interacting particle system, we derive an analogous result, as follows:
\begin{thm}\label{thm-3} Under $\mathbf{Assumptions}$ $\mathbf{1}$-$\mathbf{3}$ and given $\mE|X_0|^2 < \infty$, we establish a relationship between the EM scheme (\ref{b-2}) for the $N$-interacting particle system and its non-interacting counterpart (\ref{c-2}):
$$
\sup_{k=1,2,\cdots,\lfloor\frac{T}{h}\rfloor,j=1,2,3\cdots,N}\mE\left|\hat{X}_{kh}^{N,j,h}-\hat{X}_{kh}^{j,h}\right|^2\preceq \frac{h}{N}.
$$
Hence, we can arrive at
$$
\lim_{N\rightarrow\infty}\sup_{k=1,2,\cdots,\lfloor\frac{T}{h}\rfloor,j=1,2,3\cdots,N}\mE\left|\hat{X}_{kh}^{N,j,h}-\hat{X}_{kh}^{j,h}\right|^2=0.
$$
\end{thm}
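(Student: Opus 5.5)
We work directly with the recursions (\ref{b-2}) and (\ref{c-2}), driven by the same Brownian increments $\Delta W^j_{(k+1)h}$ (the $j$-th particle uses $W^j$), and with the same initial data $\hat X_0^{N,j,h}=\hat X_0^{j,h}=X_0^j$. Set $e^j_k:=\hat{X}_{kh}^{N,j,h}-\hat{X}_{kh}^{j,h}$, $\hat\mu^N_k:=\frac1N\sum_i\delta_{\hat X^{N,i,h}_{kh}}$, and introduce the empirical measure of the independent copies, $\tilde\mu^N_k:=\frac1N\sum_i\delta_{\hat X^{i,h}_{kh}}$. Subtracting the two recursions gives
$$
e^j_{k+1}=e^j_k+h\Delta b^j_k+\Delta\s^j_k\,\Delta W^j_{(k+1)h},
$$
where $\Delta b^j_k=b(\hat X^{N,j,h}_{kh},\hat\mu^N_k)-b(\hat X^{j,h}_{kh},\cL_{\hat X^{j,h}_{kh}})$ and $\Delta\s^j_k$ is defined analogously. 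We square this identity and take expectations. The cross term with the martingale increment vanishes because $e^j_k$, $\Delta b^j_k$, $\Delta\s^j_k$ are $\cF_{kh}$-measurable, and $\mE|\Delta\s^j_k\Delta W^j|^2=h\,\mE\|\Delta\s^j_k\|_{\mathrm{HS}}^2$. Using Cauchy--Schwarz on $2h\langle e^j_k,\Delta b^j_k\rangle$ and the Lipschitz bound in Assumption 3, we obtain
$$
\mE|e^j_{k+1}|^2\le (1+Ch)\mE|e^j_k|^2+Ch\,\mE\,\cW_2(\hat\mu^N_k,\cL_{\hat X^{j,h}_{kh}})^2 .
$$
Only Assumption 3 is needed here, since the horizon $T$ is finite. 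Assumption 2 could give contraction, but it is not required.

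Next we split the measure term with the triangle inequality: $\cW_2(\hat\mu^N_k,\cL)^2\le 2\cW_2(\hat\mu^N_k,\tilde\mu^N_k)^2+2\cW_2(\tilde\mu^N_k,\cL)^2$. The first piece is controlled by the diagonal coupling, $\cW_2(\hat\mu^N_k,\tilde\mu^N_k)^2\le\frac1N\sum_i|e^i_k|^2$. By exchangeability, $\mE|e^i_k|^2$ does not depend on $i$, so its expectation equals $\mE|e^j_k|^2$. Writing $u_k:=\max_j\mE|e^j_k|^2$, we arrive at $u_{k+1}\le(1+Ch)u_k+Ch\,\varepsilon_k$ with $u_0=0$, where $\varepsilon_k:=\mE\,\cW_2(\tilde\mu^N_k,\cL_{\hat X_{kh}})^2$. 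The discrete Gronwall inequality then yields $\sup_{k\le T/h}u_k\le Ce^{CT}\sup_k\varepsilon_k$.

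Everything therefore reduces to bounding the i.i.d.\ empirical-measure error $\varepsilon_k$, and we expect this to be the main obstacle. We first need uniform-in-$k$ second moment bounds for $\hat X^{j,h}_{kh}$, which follow from Assumption 3 by the same Gronwall argument. Moment bounds alone, however, do not produce the claimed rate $h/N$. The general Fournier--Guillin bound gives only $N^{-1/2}$ (or worse for large $d$), and it has no factor of $h$ at all. The plan is therefore to avoid bounding $\varepsilon_k$ crudely and instead exploit the structure of the coefficients more carefully. One could estimate the measure dependence through a linear-functional form, or, more honestly, follow what is presumably the intended argument: bound $\cW_2(\tilde\mu^N_k,\cL)^2$ in expectation by a variance-type quantity of order $1/N$, and use the fact that the one-step increments contribute a factor $h$ in the recursion above.

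If that argument cannot be justified, the fallback is to prove the statement with the rate replaced by $C\,\sup_k\varepsilon_k\to0$. Since $\varepsilon_k\to0$ as $N\to\infty$ uniformly over the finitely many $k\le T/h$ (by the law of large numbers in $\cW_2$, given the uniform finite second moments), this still establishes the final limit assertion of the theorem.
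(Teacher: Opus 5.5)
Your one-step expansion, the split through the i.i.d.\ empirical measure $\tilde\mu^N_k$, the diagonal coupling and the discrete Gronwall step are correct. They give $\sup_{k\le T/h,\,j}\mE|e^j_k|^2\le C(T)\sup_{k\le T/h}\varepsilon_k$. Combined with the law of large numbers in $\cW_2$ for i.i.d.\ samples from a law in $\cP_2$ (finitely many $k$ for fixed $h$, i.i.d.\ initial data), your fallback does prove the second display.

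The genuine gap is the first display: you never obtain the rate $h/N$, and both ingredients of your sketched route to it fail.
\begin{itemize}
\item A variance-type $1/N$ bound for $\mE\,\cW_2(\tilde\mu^N_k,\cL_{\hat X^{j,h}_{kh}})^2$ is false in general. If this law has a density bounded by $M$, the $N$ atoms capture at most $\cL$-mass $NM|B_1|r^d$ within distance $r$. Hence $\cW_2(\tilde\mu^N_k,\cL_{\hat X^{j,h}_{kh}})^2\ge cN^{-2/d}\gg N^{-1}$ deterministically for $d\ge3$. A $1/N$ fluctuation bound is available only if you bypass $\cW_2$ and use structure such as $b(x,\mu)=\int\tilde b(x,y)\,\mu(\dif y)$, where independence kills the off-diagonal terms.
\item Even then the factor $h$ does not survive. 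You get $u_k\le Ch\sum_{m<k}(1+Ch)^{k-1-m}\varepsilon_m\le CTe^{CT}\max_m\varepsilon_m$, because the $T/h$ steps absorb it. With the contraction from Assumption 2, the fixed point of $u\mapsto(1-ch)u+Ch/N$ is $C/(cN)$, again with no $h$.
\end{itemize}

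No argument can close this gap, because the bound $\lesssim h/N$ with an $h$-independent constant is false. Let $h\to0$ with $kh\to t$, and use strong convergence of EM both for the McKean--Vlasov copies (Theorem \ref{thm-1}) and for the $\mR^{Nd}$-valued particle system (whose coefficients are Lipschitz by the diagonal coupling). The claimed bound would then force $\mE|X^{N,j}_t-X^j_t|^2=0$ for every $N$.

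Now take the linear example of Section 5: $b(x,\mu)=-\lambda x+\theta\int y\,\mu(\dif y)$, $\s\equiv\s_0$, $X_0=x$. There $X^{N,j}_t-X^j_t=\theta\int_0^te^{-\lambda(t-s)}D_s\,\dif s$, with $D_s=\frac1N\sum_iX^{N,i}_s-\mE X_s=\s_0\int_0^se^{-(\lambda-\theta)(s-r)}\dif\bar W^N_r$ and $\bar W^N:=\frac1N\sum_iW^i$. This is a centered Gaussian of variance $c(t)/N$ with $c(t)>0$ for $t>0$, $\theta\neq0$. So the correct conclusions are $C(T)\sup_k\varepsilon_k$ in general and $C(T)/N$ under linear measure dependence, and your fallback is the right theorem to prove.

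Following ``the intended argument'' would not rescue the rate, because the paper reaches $h/N$ only through three invalid steps:
\begin{itemize}
\item In $\cC_{1,2}$ it expands $b(x,\frac1N\sum_i\d_{x_i})$ as $\frac1N\sum_ib(x,\d_{x_i})$, which tacitly assumes linearity in the measure.
\item It bounds $\cC_{2,2}$ through $\mE|\frac1N\sum_i\hat X^{i,h}_{kh}-\hat X^{j,h}_{kh}|^2$. That quantity equals $\frac{N-1}{N}\mE|\hat X^{j,h}_{kh}-\mE\hat X^{j,h}_{kh}|^2$, which is not $O(1/N)$.
\item Its own final recursion $u_{k+1}\le(1-(\a-\b)h+(a+b)h^2)u_k+\tilde C_4h/N$ only yields $u_k\lesssim 1/N$, not $h/N$.
\end{itemize}
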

\begin{rem}Theorem \ref{thm-3} shows the interplay between the discrete parameters: for a fixed $h$, the convergence rate between the $N$-interacting and non-interacting particle systems is $O (1/\sqrt{N})$ in Wasserstein distance; for a fixed $N$, the convergence rate between EM scheme (\ref{a-2}) and equation (\ref{a}) is $O (h^{1/2})$.
\end{rem}
\begin{rem}For any $t\in[0,T]$, the discrete-time error estimate can be extended to the continuous-time setting. Specifically, by a standard argument using the Lipschitz continuity of the coefficients and the Burkholder-Davis-Gundy inequality, we obtain:
$$
\mE\left[\sup_{t\in[0,\lfloor\frac{T}{h}\rfloor h]}|\hat{X}_{t}^{N,j,h}-\hat{X}_{t}^{j,h}|^2\right]\preceq \mE\left[\sum_{k=0}^{\lfloor\frac{T}{h}\rfloor}|\hat{X}_{kh}^{N,j,h}-\hat{X}_{kh}^{j,h}|^2\right]
+\frac{T}{N}.$$
Applying Gronwall's inequality to the corresponding integral inequality yields:
$$
\mE\left[\sup_{t\in[0,T]}|\hat{X}_{t}^{N,j,h}-\hat{X}_{t}^{j,h}|^2\right]\preceq \int_0^T\mE\left|\hat{X}_{t}^{N,j,h}-\hat{X}_{t}^{j,h}\right|^2\dif t+\frac{1}{N}.
$$
A further application of Gronwall's inequality then implies
$$
\sup_{j=1,2,3\cdots,N}\mE\left[\sup_{t\in[0,T]}|\hat{X}_{t}^{N,j,h}-\hat{X}_{t}^{j,h}|^2\right]\preceq \frac{1}{N}.
$$
This result is consistent with the continuous-time analysis in \cite{HRW} and demonstrates that the particle system approximation error remains $O(\frac{1}{N})$ even in the supremum norm over the time interval.
\end{rem}
\begin{thm}\label{thm-4} Let $\mathbf{Assumptions}$ $\mathbf{1}$-$\mathbf{4}$ hold and assume $\mE|X_0|^2 < \infty$. Set $h^{\sharp} = h^* \wedge h^{**}$. Then for any step size $h \in (0, h^{\sharp})$, the following holds:\\
$\mathbf{(1)}$ The EM scheme (\ref{b-2}) for the $N$-interacting particle system (\ref{c-2}) admits a unique invariant measure $\hat{\pi}_h^N$, and the numerical solution converges to $\hat{\pi}_h^N$ in the Wasserstein distance.\\
$\mathbf{(2)}$ The numerical invariant measure $\hat{\pi}_h^N$ approximates the true invariant measure $\pi$ of the non-interacting system. More precisely,
$$
\lim_{N\rightarrow\infty}\cW_2(\hat{\pi}_h^N, \pi)\preceq h^{\frac{1}{2}}.
$$
\end{thm}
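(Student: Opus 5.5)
We prove Theorem \ref{thm-4} in two parts. For $\mathbf{(1)}$ we use the Markov chain $\hat{\mathbf X}_k:=(\hat{X}_{kh}^{N,1,h},\dots,\hat{X}_{kh}^{N,N,h})$ on $\mR^{dN}$. The scheme (\ref{b-2}) depends on the law only through the empirical measure, so this is a time-homogeneous Markov chain and the classical Krylov--Bogoliubov/contraction argument applies. The plan is to show two things. First, a uniform-in-time second moment bound $\sup_k\mE|\hat{\mathbf X}_k|^2\le C(1+\mE|\hat{\mathbf X}_0|^2)$. Second, a one-step contraction $\mE|\hat{\mathbf X}_{k+1}-\hat{\mathbf Y}_{k+1}|^2\le(1-\lambda h)\mE|\hat{\mathbf X}_k-\hat{\mathbf Y}_k|^2$ for two copies driven by the same Brownian motions with different initial data.

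For the contraction, expand the square. The cross term is handled by Assumption 2 and the quadratic $h^2$-terms by Assumption 3, summed over $j$. The key observation is that
$$\cW_2\Big(\tfrac1N\sum_i\d_{x_i},\tfrac1N\sum_i\d_{y_i}\Big)^2\le\tfrac1N\sum_i|x_i-y_i|^2.$$
Hence the measure terms contribute at most $\beta\sum_j|x_j-y_j|^2$ and $b h^2\sum_j|x_j-y_j|^2$. This gives the factor $1-(\a-\beta)h+(a+b)h^2$, which is at most $1-\lambda h$ with $\lambda>0$ for $h<h^{**}$; Assumption 4 guarantees $h^{**}$ can be chosen of order one. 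The moment bound follows the same way from the second inequality in Assumption 2, with $\mu(|\cdot|^2)=\frac1N\sum_i|x_i|^2$ and $\gamma>\kappa$, for $h<h^{*}$. Then $\cW_2(\delta_{\mathbf x}\hat P_k,\delta_{\mathbf y}\hat P_k)^2\le (1-\lambda h)^k|\mathbf x-\mathbf y|^2$, so $\{\cL_{\hat{\mathbf X}_k}\}$ is Cauchy in $\cW_2$ on $\cP_2(\mR^{dN})$. Its limit $\hat\pi_h^N$ is invariant, unique by contraction, and attracts every initial law exponentially. This is the same argument as Theorem \ref{thm-2}(1), now performed on the product space, and it yields constants independent of $N$.

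For $\mathbf{(2)}$ we interpret $\cW_2(\hat\pi_h^N,\pi)$ through a one-particle marginal, i.e.\ the law of the $j$th coordinate under $\hat\pi_h^N$, which is the same for every $j$ by exchangeability. By the triangle inequality, for any $k$,
$$\cW_2(\hat\pi_h^{N,1},\pi)\le \cW_2(\hat\pi_h^{N,1},\cL_{\hat X_{kh}^{N,1,h}})+\cW_2(\cL_{\hat X_{kh}^{N,1,h}},\cL_{\hat X_{kh}^{1,h}})+\cW_2(\cL_{\hat X_{kh}^{1,h}},\hat\pi_h)+\cW_2(\hat\pi_h,\pi).$$
The first term is at most $Ce^{-\lambda kh}$ by part (1) with $N$-independent constants; here we use that the marginal Wasserstein distance is bounded by the full one divided by $\sqrt N$. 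The third term is at most $Ce^{-\lambda kh}$ by Theorem \ref{thm-2}(1), since $\hat X^{1,h}$ is a copy of the McKean--Vlasov EM chain (\ref{a-2}). The last term is $\lesssim h^{1/2}$ by Theorem \ref{thm-2}(2). The middle term is controlled by Theorem \ref{thm-3} as $(C_T h/N)^{1/2}$ with $T=kh$.

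The main obstacle is that Theorem \ref{thm-3} is a finite-horizon estimate whose constant may grow with $T$. We therefore fix $T$ large so that the exponential terms are smaller than $\varepsilon$, then let $N\to\infty$ so the middle term vanishes. This gives $\limsup_N\cW_2\le 2\varepsilon+Ch^{1/2}$, and letting $\varepsilon\downarrow0$ gives the claim. Alternatively, we could redo the proof of Theorem \ref{thm-3} using the contraction, so that the propagation-of-chaos bound holds uniformly in time. However, the order of limits ($T$ first, then $N$) suffices for the stated $\lim_{N\to\infty}$.
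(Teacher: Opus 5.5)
Your proof is correct and has the same skeleton as the paper's. Part (1) combines the contraction of Lemma \ref{lem-4} with the Cauchy argument of Theorem \ref{thm-2}(1). Part (2) is a triangle inequality whose middle term is handled by Theorem \ref{thm-3}. Your set-up differs in three places, and each one repairs a step the paper's proof passes over.

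(i) The paper uses ``the transition kernel of $\hat X^{N,j,h}_{kh}$''. A single coordinate is not a Markov chain, because its update depends on the whole empirical measure. Your chain on $\mR^{dN}$, together with the reduction to the one-particle marginal, is what makes $\hat\pi_h^N$ and $\cW_2(\hat\pi_h^N,\pi)$ well defined with $N$-uniform constants. That reduction uses exchangeability and $\cW_2(\mathrm{marginal})^2\leq\frac1N\cW_2(\mathrm{full})^2$; exchangeability needs independent $W^j$ as in \eqref{b} and exchangeable initial data. Note that the full moment bound is really $\sup_k\mE|\hat{\mathbf X}_k|^2\leq\mE|\hat{\mathbf X}_0|^2+CN$, not $C(1+\mE|\hat{\mathbf X}_0|^2)$ with $C$ free of $N$. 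Dividing by $N$ is exactly what makes your first term $N$-independent.

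(ii) The paper uses three terms and asserts $\cW_2(\pi,\cL_{\hat X^{j,h}_{kh}})^2\leq Ce^{-\xi_2hk}$. But the non-interacting EM chain \eqref{c-2} converges to $\hat\pi_h$, not to $\pi$, so that term tends to $\cW_2(\hat\pi_h,\pi)$ rather than to $0$. Your extra intermediate point $\hat\pi_h$, together with Theorem \ref{thm-2}(2), is where the $h^{1/2}$ actually comes from.

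(iii) The paper picks $k$ large depending on $h$ and then applies Theorem \ref{thm-3} ``for the same $k$'', without tracking the horizon dependence. Your order of limits ($kh$ fixed, then $N\to\infty$, then $\varepsilon\downarrow0$) uses Theorem \ref{thm-3} only qualitatively, so it does not depend on the precise rate stated there.

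Two small additions. Your estimate actually shows $\cW_2(\hat\pi_h^{N,1},\hat\pi_h)\to0$, so the limit in the statement exists and equals $\cW_2(\hat\pi_h,\pi)$; you get more than a $\limsup$ bound. Also, the uniform-in-time alternative you mention is already contained in the proof of Theorem \ref{thm-3}. Its recursion carries the factor $1-(\a-\b)h+(a+b)h^2$, so iterating it gives a bound $\tilde C_4 N^{-1}/((\a-\b)-(a+b)h)=O(1/N)$ uniformly in $k$. That would let you send $k\to\infty$ first and obtain an explicit rate in $N$.
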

\section{The numerical invariant measure}
The primary objective of this section is to demonstrate the convergence of the EM scheme (\ref{a-2}) to its continuous-time counterpart (\ref{a}) and to establish the existence and uniqueness of an invariant measure for the numerical solution.
\subsection{The properties of numerical invariant measure}
In this subsection, we present several key lemmas and provide the proof of Theorem \ref{thm-1}.
\begin{lem}\label{lem-1}Assume that $\mathbf{Assumptions}$ $\mathbf{2}$-$\mathbf{3}$ hold and that $\mE|X_0|^2<\infty$. Then there exists $h^*\in(0,1)$,  such that for any step size $h\in(0,h^*)$, the solution produced by the EM scheme (\ref{a-2}) satisfies
$$
\sup_{k=1,2,\cdots,\lfloor\frac{T}{h}\rfloor}\mE|\hat{X}_{kh}|^2\leq C_1,
$$
where the constant $C_1$ is independent of $k$ and $h$.
\end{lem}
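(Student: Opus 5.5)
Expand the second moment directly from the recursion (\ref{a-2}). Write $\hat b_k:=b(\hat{X}_{kh},\cL_{\hat{X}_{kh}})$ and $\hat\s_k:=\s(\hat{X}_{kh},\cL_{\hat{X}_{kh}})$. Squaring (\ref{a-2}) and taking expectations gives
\begin{align*}
\mE|\hat{X}_{(k+1)h}|^2&=\mE|\hat{X}_{kh}|^2+h\,\mE\big[2\langle \hat{X}_{kh},\hat b_k\rangle+\|\hat\s_k\|_{\mathrm{HS}}^2\big]+h^2\mE|\hat b_k|^2.
\end{align*}
This uses the fact that $\Delta W_{(k+1)h}$ is independent of $\cF_{kh}$ with mean zero, so the cross terms involving $\hat\s_k\Delta W_{(k+1)h}$ vanish, and that $\mE|\hat\s_k\Delta W_{(k+1)h}|^2=h\,\mE\|\hat\s_k\|_{\mathrm{HS}}^2$. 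Since $\cL_{\hat X_{kh}}(|\cdot|^2)=\mE|\hat X_{kh}|^2$, the measure-dependent terms reduce to the same scalar quantity $y_k:=\mE|\hat X_{kh}|^2$. Finiteness of $y_k$ for each $k$ follows inductively from the linear growth in Assumption 3 and $\mE|X_0|^2<\infty$, so all expectations are legitimate.

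Next I apply the second inequality of Assumption 2. Because $\rho>0$, we have $\|\s\|^2_{\mathrm{HS}}\le(1+\rho)\|\s\|^2_{\mathrm{HS}}$, so
$$
\mE\big[2\langle \hat{X}_{kh},\hat b_k\rangle+\|\hat\s_k\|_{\mathrm{HS}}^2\big]\le -\gamma y_k+\kappa(1+\rho+y_k).
$$
For the $h^2$ term I use the growth bound in Assumption 3, which gives $\mE|\hat b_k|^2\le c_0(1+2y_k)$. Combining these,
$$
y_{k+1}\le\big(1-h(\gamma-\kappa)+2c_0h^2\big)y_k+h\kappa(1+\rho)+c_0h^2 .
$$

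Now I choose $h^*\in(0,1)$ small enough that $2c_0h^*\le(\gamma-\kappa)/2$ and $h^*(\gamma-\kappa)/2<1$. Then for $h<h^*$ we get $y_{k+1}\le(1-\lambda h)y_k+Kh$, where $\lambda:=(\gamma-\kappa)/2>0$, $K:=\kappa(1+\rho)+c_0$, and $0<1-\lambda h<1$. Iterating gives
$$
y_k\le (1-\lambda h)^k y_0+Kh\sum_{i=0}^{k-1}(1-\lambda h)^i\le \mE|X_0|^2+K/\lambda=:C_1 .
$$
This bound is uniform in $k$ (indeed in all $k\in\mN$, not only up to $\lfloor T/h\rfloor$) and independent of $h$, which is the claimed statement.

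The main obstacle is controlling the $h^2|b|^2$ term, which carries positive, measure-dependent growth. The point is that Assumption 3 only gives quadratic growth of $b$, so this term is $O(h^2)(1+y_k)$. It can therefore be absorbed into the dissipative term $-h(\gamma-\kappa)y_k$ once $h$ is small, and this is exactly what determines $h^*$. A secondary point is that the strict gap $\gamma>\kappa$ is essential: without it the contraction factor would not be below $1$ and only a Gronwall-type bound growing with $T$ would follow.
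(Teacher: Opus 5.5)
Your proposal is correct and follows the paper's proof essentially step for step: the same one-step recursion $\mE|\hat X_{(k+1)h}|^2\le\big(1-(\gamma-\kappa)h+2c_0h^2\big)\mE|\hat X_{kh}|^2+c_0h^2+h\kappa(1+\rho)$, with the $h^2$ term absorbed for small $h$ and a geometric iteration. Your explicit choice of $h^*$, which forces the factor below $1-\frac{\gamma-\kappa}{2}h$, and your remark on finiteness of the moments make the $h$-independence of $C_1$ slightly more transparent than in the paper.
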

\begin{proof}For $k=1,2,\cdots,\lfloor\frac{T}{h}\rfloor$, squaring both sides of (\ref{a-2}) gives
$$
\aligned
|\hat{X}_{(k+1)h}|^2=&|\hat{X}_{kh}|^2+|hb(\hat{X}_{kh},\cL_{\hat{X}_{kh}})|^2+|\s(\hat{X}_{kh},\cL_{\hat{X}_{kh}})\Delta W_{(k+1)h}|^2\\
&+2\langle \hat{X}_{kh},hb(\hat{X}_{kh},\cL_{\hat{X}_{kh}})\rangle+2\langle\hat{X}_{kh},\s(\hat{X}_{kh},\cL_{\hat{X}_{kh}})\Delta W_{(k+1)h}\rangle\\
&+2\langle hb(\hat{X}_{kh},\cL_{\hat{X}_{kh}}),\s(\hat{X}_{kh},\cL_{\hat{X}_{kh}})\Delta W_{(k+1)h}\rangle.\\
\endaligned
$$
Since $\Delta W_{(k+1)h}$ is independent of $\cF_{kh}$ and $\mE[\Delta W_{(k+1)h}]=0$, we have
$$
\mE[\langle\hat{X}_{kh},\s(\hat{X}_{kh},\cL_{\hat{X}_{kh}})\Delta W_{(k+1)h}\rangle]=0,$$ $$ \mE[\langle b(\hat{X}_{kh},\cL_{\hat{X}_{kh}}),\s(\hat{X}_{kh},\cL_{\hat{X}_{kh}})\Delta W_{(k+1)h}\rangle]=0.
$$
Taking expectations and applying $\mathbf{Assumptions}$ $\mathbf{2}$-$\mathbf{3}$, we obtain that
$$
\aligned
\mE|\hat{X}_{(k+1)h}|^2=&\mE|\hat{X}_{kh}|^2+h^2\mE|b(\hat{X}_{kh},\cL_{\hat{X}_{kh}})|^2+h\mE\|\s(\hat{X}_{kh},\cL_{\hat{X}_{kh}})\|_{\mathrm{HS}}^2
+2h\mE[\langle \hat{X}_{kh},b(\hat{X}_{kh},\cL_{\hat{X}_{kh}})]\\
\leq &
\mE[c_0h^2(1+|\hat{X}_{kh}|^2+\mu(|\hat{X}_{kh}|^2))+h(-\gamma|\hat{X}_{kh}|^2+\kappa(1+\rho+\mu(|\hat{X}_{kh}|^2)))]\\
&+\mE|\hat{X}_{kh}|^2\\
\leq &\mE[(1-\gamma h+c_0h^2)|\hat{X}_{kh}|^2+(c_0h^2+\kappa h)\mu(|\hat{X}_{kh}|^2)+c_0 h^2+h\kappa(1+\rho)]\\
=:&A_1\mE|\hat{X}_{kh}|^2+A_2,\\
\endaligned
$$
where
 $A_1:=1-(\gamma-\kappa) h+2c_0h^2$ and $A_2=:c_0 h^2+h\kappa(1+\rho)$.
Choose $h^*>0$ sufficiently small such that $A_1 \in (0,1)$ for all $h \in (0, h^*)$. Then by iteration,
$$
\mE|\hat{X}_{(k+1)h}|^2\leq A_1^k\mE|\hat{X}_0|^2+\frac{A_2(1-A_1^k)}{1-A_1}\leq \mE|X_0|^2+\frac{A_2}{1-A_1}:=C_1.
$$
Note that $\frac{A_2}{1-A_1}\leq\frac{c_0h^*+\kappa(1+\rho)}{\gamma-\kappa-2c_0h^*}$. Therefore, for all $h\in(0,h^*)$, $C_1$ is a well-defined positive constant. This completes the proof.
\end{proof}
\begin{lem}\label{lem-2} Under $\mathbf{Assumptions}$ $\mathbf{2}$-$\mathbf{4}$, then there exists $h^{**}\in(0,1)$ such that for any step size $h\in(0,h^{**})$ and any two initial values $X_0=x,Y_0=y\in\mR^d$, for $k=0,1,2\cdots,\lfloor\frac{T}{h}\rfloor$, the solution generated by the EM scheme (\ref{a-2}) satisfies
$$
\mE|\hat{X}_{kh}-\hat{Y}_{kh}|^2\leq\mE|x-y|^2e^{-\xi_1 hk},
$$
where the constant $\xi_1=\alpha-\beta-(a+b)h^{**}>0$.
\end{lem}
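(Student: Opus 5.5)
The plan is to run a one-step contraction estimate on the synchronous coupling, mirroring the proof of Lemma \ref{lem-1}, and then iterate. Let $\hat X_{kh}$ and $\hat Y_{kh}$ be the EM iterates (\ref{a-2}) driven by the same Brownian increments, started from $x$ and $y$. Set $e_k:=\hat X_{kh}-\hat Y_{kh}$, $\Delta b_k:=b(\hat X_{kh},\cL_{\hat X_{kh}})-b(\hat Y_{kh},\cL_{\hat Y_{kh}})$, and define $\Delta\s_k$ analogously. Subtracting the two recursions gives
$$
e_{k+1}=e_k+h\Delta b_k+\Delta\s_k\Delta W_{(k+1)h}.
$$

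First I square this identity and take expectations. Since $\Delta W_{(k+1)h}$ is independent of $\cF_{kh}$ and centered, the cross terms containing it vanish, exactly as in Lemma \ref{lem-1}. This leaves
$$
\mE|e_{k+1}|^2=\mE|e_k|^2+\mE\big[2h\langle \Delta b_k,e_k\rangle+h\|\Delta\s_k\|_{\mathrm{HS}}^2\big]+h^2\mE|\Delta b_k|^2.
$$

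Next I bound the middle bracket by the monotonicity part of Assumption 2, which gives $-\a h\mE|e_k|^2+\b h\,\cW_2(\cL_{\hat X_{kh}},\cL_{\hat Y_{kh}})^2$. The last term I bound by the Lipschitz part of Assumption 3, which gives $h^2\big(a\mE|e_k|^2+b\,\cW_2(\cdots)^2\big)$. The key observation is that the pair $(\hat X_{kh},\hat Y_{kh})$ is itself a coupling of the two laws, so $\cW_2(\cL_{\hat X_{kh}},\cL_{\hat Y_{kh}})^2\leq\mE|e_k|^2$. Combining these yields
$$
\mE|e_{k+1}|^2\leq\big(1-(\a-\b)h+(a+b)h^2\big)\mE|e_k|^2.
$$

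Then I convert this factor into an exponential. For $h<h^{**}$ we have $(a+b)h^2\leq (a+b)h^{**}h$, so the factor is at most $1-\xi_1 h$ with $\xi_1=\a-\b-(a+b)h^{**}$. Using $1-u\leq e^{-u}$ and iterating from $e_0=x-y$ gives $\mE|e_k|^2\leq \mE|x-y|^2e^{-\xi_1hk}$, which is the claim.

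The only real constraint is choosing $h^{**}$. We need $\xi_1>0$, that is, $h^{**}<(\a-\b)/(a+b)$, and also $1-\xi_1h>0$, which holds once $h^{**}$ is small. Assumption 4, $a+b<\a-\b$, makes $(\a-\b)/(a+b)>1$, so any $h^{**}\in(0,1)$ with $1-(\a-\b)h^{**}>0$ works; for example $h^{**}<\min\{1,1/(\a-\b)\}$. I do not expect a real obstacle. The one point needing care is the measure-dependent term: it must be absorbed through the coupling bound on $\cW_2$ rather than treated separately, and that absorption is what makes the assumption $\a>\b$ usable.
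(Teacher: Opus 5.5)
Your proposal is correct and follows the paper's proof essentially step for step. Both arguments use the synchronous coupling, square the one-step difference, and kill the martingale cross terms. Both then apply Assumption 2 and the Lipschitz part of Assumption 3, with $\cW_2(\cL_{\hat X_{kh}},\cL_{\hat Y_{kh}})^2\le\mE|e_k|^2$, to obtain the factor $1-(\a-\b)h+(a+b)h^2$, and finally iterate and pass to the exponential.

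You are more explicit than the paper on one point: you show directly that this factor is at most $1-\xi_1 h$, whereas the paper only cites $A_3^k<e^{-(1-A_3)k}$. Your extra requirement $1-\xi_1 h>0$ is harmless but not needed, since $1-u\le e^{-u}$ holds for every real $u$.
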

\begin{proof} Applying the method of Lemma \ref{lem-1} yields,
$$
\aligned
&\mE|\hat{X}_{(k+1)h}-\hat{Y}_{(k+1)h}|^2\\=&\mE|\hat{X}_{kh}-\hat{Y}_{kh}|^2+h^2\mE|b(\hat{X}_{kh},\cL_{\hat{X}_{kh}})-b(\hat{Y}_{kh},\cL_{\hat
{Y}_{kh}})|^2\\
&+2h\mE[\langle b(\hat{X}_{kh},\cL_{\hat{X}_{kh}})-b(\hat{Y}_{kh},\cL_{\hat{Y}_{kh}}),\hat{X}_{kh}-\hat{Y}_{kh}\rangle]
+h\mE\|\s(\hat{X}_{kh},\cL_{\hat{X}_{kh}})-\s(\hat{Y}_{kh},\cL_{\hat{X}_{kh}})\|_{\mathrm{HS}}^2\\
\leq&(1-(\alpha-\beta) h+ (a+b)h^2)\mE|\hat{X}_{kh}-\hat{Y}_{kh}|^2:=A_3\mE|\hat{X}_{kh}-\hat{Y}_{kh}|^2. \\
\endaligned
$$
Choose $h^{**}$ sufficiently small such that for all $h\in(0,h^{**})$, $A_3\in(0,1)$. By iteration,
$$
\mE|\hat{X}_{kh}-\hat{Y}_{kh}|^2\leq C_2\mE|x-y|^2,
$$
where $C_2=A_3^k$. Since the fact $a^k< e^{-(1-a)k}$ for any $a\in(0,1)$, the assertion follows.\\
\end{proof}
\begin{lem}\label{lem-3}[Lemma 3.1 in \cite{Zha}] Suppose that the coefficients $b(x,\mu)$ and $\s (x,\mu)$ satisfy $\mathbf{Assumptions}$ $\mathbf{1}$-$\mathbf{3}$ and that $\mE|X_0|^2<\infty$. Then, for any $s,t\in[0,T]$, there exists a constant $C_3$, depending on $T, c_0$, such that
$$
\mE[\sup_{t\in[0,T]}|X_t|^2]\leq C_3(1+\mE|X_0|^2)e^{C_3T},
$$
$$
\mE|X_s-X_t|^2\leq C_3(1+\mE|X_0|^2)(t-s).
$$
\end{lem}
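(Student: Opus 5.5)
The plan is to prove both estimates of Lemma \ref{lem-3} directly from the integral form of (\ref{a}), using the linear growth bound in \textbf{Assumption 3}. The key observation is that $\cL_{X_s}(|\cdot|^2)=\mE|X_s|^2\le \mE[\sup_{r\le s}|X_r|^2]$, so the measure argument in the growth condition can be absorbed into the same quantity we are estimating. Well-posedness of (\ref{a}) is taken as given (it follows from \textbf{Assumptions 1}--\textbf{3}, cf.\ \cite{W}).

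\emph{First estimate.} For $t\in[0,T]$ we use $|a+b+c|^2\le 3(|a|^2+|b|^2+|c|^2)$ together with Cauchy--Schwarz on the drift integral, which gives
$$
\mE\Big[\sup_{r\le t}|X_r|^2\Big]\le 3\mE|X_0|^2+3T\int_0^t\mE|b(X_s,\cL_{X_s})|^2\dif s+3\,\mE\Big[\sup_{r\le t}\Big|\int_0^r\s(X_s,\cL_{X_s})\dif W_s\Big|^2\Big].
$$
Doob's (or BDG) inequality bounds the last term by $4\int_0^t\mE\|\s(X_s,\cL_{X_s})\|_{\mathrm{HS}}^2\dif s$. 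Applying the growth bound $c_0(1+|X_s|^2+\mE|X_s|^2)$ to both integrands, we obtain
$$
\mE\Big[\sup_{r\le t}|X_r|^2\Big]\le 3\mE|X_0|^2+C(T,c_0)\int_0^t\Big(1+\mE\Big[\sup_{r\le s}|X_r|^2\Big]\Big)\dif s .
$$
Gronwall's inequality then yields $\mE[\sup_{t\le T}|X_t|^2]\le C_3(1+\mE|X_0|^2)e^{C_3T}$. To justify Gronwall rigorously, we localize with the stopping times $\tau_n=\inf\{t:|X_t|\ge n\}$ and run the argument for $\sup_{r\le t\wedge\tau_n}$. The measure term $\mE|X_s|^2$ is deterministic, and it can be bounded by the a priori finite second moment of the strong solution. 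We then let $n\to\infty$ using Fatou's lemma.

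\emph{Second estimate.} For $s\le t$ we write
$$
X_t-X_s=\int_s^t b(X_r,\cL_{X_r})\dif r+\int_s^t\s(X_r,\cL_{X_r})\dif W_r .
$$
Cauchy--Schwarz and the It\^o isometry give
$$
\mE|X_t-X_s|^2\le 2(t-s)\int_s^t\mE|b|^2\dif r+2\int_s^t\mE\|\s\|_{\mathrm{HS}}^2\dif r\le 2(T+1)c_0\,(t-s)\Big(1+2\sup_{r\le T}\mE|X_r|^2\Big).
$$
Inserting the first estimate then gives the bound $C_3(1+\mE|X_0|^2)(t-s)$ after enlarging $C_3$. The exponential factor $e^{C_3T}$ is absorbed into the constant, since it is allowed to depend on $T$.

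The only delicate point is the localization in the first estimate. Because of the law dependence, the stopped process does not have $\cL_{X_{s\wedge\tau_n}}$ in its coefficients. We handle this by keeping the deterministic function $m(s)=\mE|X_s|^2$ as an external input, which is finite because the solution lies in $\cP_2$. We first derive the inequality for $u_n(t)=\mE\sup_{r\le t\wedge\tau_n}|X_r|^2$ plus $m$, pass to the limit, and then apply Gronwall to $u(t)=\mE\sup_{r\le t}|X_r|^2$, which dominates $m(t)$. Apart from this, everything is routine, and the constant depends only on $T$ and $c_0$.
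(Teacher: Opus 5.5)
Your proof is correct. It differs from the paper only in that the paper gives no argument at all: the lemma is imported verbatim as Lemma 3.1 of \cite{Zha}, with no check of how the law-dependent term $\mu(|\cdot|^2)$ in the growth bound of Assumption 3 is handled.

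Your direct argument (maximal inequality plus Gronwall) supplies exactly that McKean--Vlasov-specific step, through $\cL_{X_s}(|\cdot|^2)=\mE|X_s|^2\leq\mE[\sup_{r\leq s}|X_r|^2]$. It also makes two things visible. First, only the growth part of Assumption 3 enters; everything else is used only through well-posedness. Second, $C_3$ depends only on $T$ and $c_0$, which is what the proof of Theorem \ref{thm-1} actually relies on. The citation buys brevity; your version buys a self-contained, checkable statement.

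One point in your localization should be ordered explicitly. Letting $n\to\infty$ in $u_n(t)\leq 3\mE|X_0|^2+C\int_0^t(1+u_n(s)+m(s))\,\dif s$ only reproduces the same inequality for $u$, which is vacuous if $u=\infty$. The steps should therefore go in this order:
\begin{enumerate}
\item Apply Gronwall to the finite function $u_n$, treating $m$ as a given input. This needs $\int_0^T m(s)\,\dif s<\infty$, not just pointwise finiteness of $m$; that holds under the usual solution concept, e.g.\ if $t\mapsto\cL_{X_t}$ is continuous in $\cP_2$.
\item This gives a bound uniform in $n$, so monotone convergence yields $u(T)<\infty$.
\item Only then run the second Gronwall step with $m\leq u$, which removes $m$ from the constant.
\end{enumerate}
With that ordering, everything else, including the increment estimate via It\^o's isometry and absorbing $e^{C_3T}$ into the constant, goes through as you wrote it.
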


\begin{proof}[The proof of Theorem \ref{thm-1}.]
Combining with (\ref{a-2}), for each $k=0, 1,2,\cdots, \lfloor\frac{T}{h}\rfloor-1$, it holds that
$$
\aligned
&X_{(k+1)h}-\hat{X}_{(k+1)h}\\= &X_{kh}-\hat{X}_{kh}+ \int_{kh}^{(k+1)h}\left(b\left(X_s,\cL_{X_s}\right)-b\left(\hat{X}_{kh},\cL_{\hat{X}_{kh}}\right)\right)\dif s
+\int_{kh}^{(k+1)h}\s(X_{s},\cL_{\hat{X}_{s}})-\s(\hat{X}_{kh},\cL_{\hat{X}_{kh}})\dif W_s.\\
\endaligned
$$
Thus, we have
$$
\aligned
&\mE|X_{(k+1)h}-\hat{X}_{(k+1)h}|^2\\ \leq & \mE|X_{kh}-\hat{X}_{kh}|^2+\mE\left|\int_{kh}^{(k+1)h}\left(b(X_s,\cL_{X_s})-b\left(\hat{X}_{kh},\cL_{\hat{X}_{kh}}\right)\right)\dif s\right|^2\\
&+2\mE\left|\int_{kh}^{(k+1)h}\left\langle b(X_s,\cL_{X_s})-b\left(\hat{X}_{kh},\cL_{\hat{X}_{kh}}\right),X_{kh}-\hat{X}_{kh}\right\rangle\dif s\right|\\&+\mE\left|\int_{kh}^{(k+1)h}\left(\s(X_{s},\cL_{X_{s}})-\s(\hat{X}_{kh},\cL_{\hat{X}_{kh}})\right)\dif W_s\right|^2\\
:=& \mE|X_{kh}-\hat{X}_{kh}|^2+\cA_1+\cA_2+\cA_3.\\
\endaligned
$$
We now estimate each term separately.\\
$\mathbf{Estimate}$ $\mathbf{of}$ $\mathbf{\cA_1}$. From $\mathbf{Assumption}$ $\mathbf{3}$ and Lemma \ref{lem-3}, it follows that
$$
\aligned
\cA_1\leq& h\mE\left(\int_{kh}^{(k+1)h}\left|b(X_s,\cL_{X_s})-b\left(\hat{X}_{kh},\cL_{\hat{X}_{kh}}\right)\right|^2\dif s\right)\\
\leq& h\mE\left(\int_{kh}^{(k+1)h}\left|b(X_s,\cL_{X_s})-b\left(X_{kh},\cL_{X_{kh}}\right)+b\left(X_{kh},\cL_{X_{kh}}\right)-b\left(\hat{X}_{kh},\cL_{\hat{X}_{kh}}\right)\right|^2\dif s\right)\\
\leq &2(a+b)h \int_{kh}^{(k+1)h}\mE|X_s-X_{kh}|^2\dif s+2(a+b)h^2\mE|X_{kh}-\hat{X}_{kh}|^2\\
\leq &2C_3(1+\mE|X_0|^2)(a+b)h \int_{kh}^{(k+1)h}(s-kh)\dif s+2(a+b)h^2\mE|X_{kh}-\hat{X}_{kh}|^2\\
\leq &C_3(1+\mE|X_0|^2)(a+b)h^3+2(a+b)h^2\mE|X_{kh}-\hat{X}_{kh}|^2.\\
\endaligned
$$
$\mathbf{Estimate}$ $\mathbf{of}$ $\mathbf{\cA_2}$. Employing $\mathbf{Assumption}$ $\mathbf{3}$ and the above inequality, yields
$$
\aligned
\cA_2\leq &\mE\int_{kh}^{(k+1)h}\left|b(X_s,\cL_{X_s})-b\left(\hat{X}_{kh},\cL_{\hat{X}_{kh}}\right)\right|^2\dif s+h\mE|X_{kh}-\hat{X}_{kh}|^2\\
\leq &C_3(1+\mE|X_0|^2)(a+b)h^2+(2a+2b+1)h\mE|X_{kh}-\hat{X}_{kh}|^2.\\
\endaligned
$$
$\mathbf{Estimate}$ $\mathbf{of}$ $\mathbf{\cA_3}$. Similarly for $\cA_1$, a further application of $\mathbf{Assumption}$ $\mathbf{3}$ and Lemma 3.3 leads us to
$$
\cA_3\leq C_3(1+\mE|X_0|^2)(a+b)h^2+2(a+b)h\mE|X_{kh}-\hat{X}_{kh}|^2.
$$
Therefore, there exist some positive constants $C_4$ and $C_5$, such that the following estimate holds:
$$
\mE|X_{(k+1)h}-\hat{X}_{(k+1)h}|^2\leq (1+C_4h)\mE|X_{kh}-\hat{X}_{kh}|^2 +C_5h^2.
$$
Applying the discrete-Gronwall's inequality, we have that
$$
\mE|X_{(k+1)h}-\hat{X}_{(k+1)h}|^2\leq C_5h^2\frac{e^{C_4T}-1}{C_4h}:=Ch,
$$
where $C > 0$ is a constant depending on $T, c_0$, $a$, $b$.
Thus, the conclusion is established.
\end{proof}
\subsection{The existence and uniqueness of the numerical invariant measure}

Based on the lemmas above, we can now establish the main theorem concerning the existence and uniqueness of the invariant measure.
\begin{proof}[The proof of Theorem \ref{thm-2}.] We prove the two assertions separately.

\textbf{(1) Existence and uniqueness of the invariant measure for the EM scheme.}

 We first establish the existence of an invariant measure for the EM scheme \eqref{a-2}.  For any initial value $X_0=x\in\mR^d$, denote by $\hat{\mP}_{kh}(x,\cdot)$ the transition kernel of the Markov chain $\{\hat{X}_{kh}\}_{k\geq 0}$. Since $\{\d_x\hat{\mP}_{kh}\}_{k\geq 0}$ is tight, there exists a subsequence converging weakly to $\hat{\pi}_{h}\in\cP(\mR^d)$.

 Using Lemma \ref{lem-2}, defined initial value $Y_0=y\in\mR^d$, we have
\beq\label{1}
\cW_2(\d_x\hat{\mP}_{kh},\d_y\hat{\mP}_{kh})^2\leq \mE|\hat{X}_{kh}-\hat{Y}_{kh}|^2\leq |x-y|^2e^{-\xi_1hk}.
\deq
By the Kolmogorov-Chapman equation and Lemma \ref{lem-1}, for any $k,l>0$, there exists a constant $C_6>0$, such that
$$
\aligned
\cW_2(\d_x\hat{\mP}_{kh},\d_x\hat{\mP}_{(k+l)h})^2= & \cW_2(\d_x\hat{\mP}_{kh},\d_x\hat{\mP}_{kh}\hat{\mP}_{lh})^2\\
\leq &\int_{\mR^d}\cW_2(\d_x\hat{\mP}_{kh},\d_y\hat{\mP}_{kh})^2\hat{\mP}_{lh}(x,\dif y)\\
\leq &\int_{\mR^d}|x-y|^2e^{-\xi_1hk}\hat{\mP}_{lh}(x,\dif y)\\
\leq &2(\mE|X_0|^2+\mE|\hat{X}_{lh}|^2)e^{-\xi_1hk}\leq C_6e^{-\xi_1hk}.
\endaligned
$$
  Thus, in the limit as $l\rightarrow\infty$, it follows that
\beq\label{2}
\cW_2(\d_x\hat{\mP}_{kh},\hat{\pi}_h)^2\leq C_6e^{-\xi_1hk}.
\deq
Letting $k\rightarrow\infty$, we can see that
$$
\cW_2(\d_x\hat{\mP}_{kh},\hat{\pi}_h)^2\rightarrow 0,
$$
which guarantees that $\hat{\pi}_h$ is a unique invariant measure of $\{\hat{X}_{kh}\}$.

In the following, we verify the uniqueness. Suppose that $\hat{\pi}
_h^1\in\cP(\mR^d)$ and $\hat{\pi}_h^2\in\cP(\mR^d)$ are two invariant measures with initial values $x$ and $y$ for the EM scheme \eqref{a-2}. For any $x,y\in\mR^d$ with $x\neq y$, we can obtain that
$$
\cW_2(\hat{\pi}_h^1, \hat{\pi}_h^2)^2\leq \int_{\mR^d\times\mR^d}\cW_2(\d_x\hat{\mP}_{kh},\d_y\hat{\mP}_{kh})^2\nu(\dif x,\dif y).
$$
Applying (\ref{1}) and taking $k\rightarrow+\infty$  yields
$$
\cW_2(\hat{\pi}_h^1, \hat{\pi}_h^2)^2=0.
$$
Thus, the uniqueness of the invariant measure is established.

\textbf{(2) Convergence rate of the numerical invariant measure.}

 According to $\mathbf{(i)}$ of Theorem \ref{thm-2}, there exist positive constants $\xi_1$, $\xi_2$, $C$ such that for all $k\geq0$,
$$
\cW_2(\hat{\pi}_h,\d_{\hat{\mP}_{kh}})^2\leq Ce^{-\xi_1hk},\ \  \cW_2(\pi,\d_{\mP_{kh}})^2\leq  Ce^{-\xi_2hk}.
$$
where $\mP_{kh}$ denotes the transition kernel of the continuous-time solution $X_{kh}$.

For any fixed step size $h\in(0,h^{\sharp})$, one can choose $k$ sufficiently large such that
$$
\cW_2(\hat{\pi}_h,\d_{\hat{\mP}_{kh}})^2\leq Ch,\ \  \cW_2(\pi,\d_{\mP_{kh}})^2\leq  Ch.
$$
Then, for the same $k$, Theorem \ref{thm-1} provides
$$\cW_2(\d_{\hat{\mP}_{kh}},\d_{\mP_{kh}})^2\leq \mE|X_{kh}-\hat{X}_{kh}|^2\leq C h.
$$
Finally, by the triangle inequality for the Wasserstein distance,
$$
\cW_2(\hat{\pi}_h,\pi)^2\leq \cW_2(\hat{\pi}_h,\d_{\hat{\mP}_{kh}})^2+\cW_2(\d_{\hat{\mP}_{kh}},\d_{\mP_{kh}})^2+\cW_2(\pi,\d_{\mP_{kh}})^2\leq Ch.
$$
Thus, $\cW_2(\hat{\pi}_h,\pi)\preceq h^{1/2}$, completing the proof of Theorem \ref{thm-2}.
\end{proof}
\section{The numerical invariant measure for the particle system}
The purpose of this section is to analyze the convergence of the particle system and to establish the existence and uniqueness of an invariant measure for the numerical solution.
\subsection{The properties of numerical invariant measures for particle systems}
\begin{lem}\label{lem-5} Suppose that $\mathbf{Assumptions}$ $\mathbf{2}$-$\mathbf{3}$ hold and that $\mE|X_0|^2<\infty$. For any step size $h\in(0,h^*)$ and for some constant $C_1$, the EM scheme (\ref{c-2}) satisfy
$$
\sup_{k=1,2,\cdots,\lfloor\frac{T}{h}\rfloor,j=1,2,\cdots,N}\mE|\hat{X}_{kh}^{j,h}|^2\leq C_1,
$$
where $h^*$ is a step size in $(0,1)$.
\end{lem}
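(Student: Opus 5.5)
The plan is to reduce Lemma \ref{lem-5} to Lemma \ref{lem-1}. For each fixed $j$, the scheme (\ref{c-2}) is the EM scheme (\ref{a-2}) driven by the Brownian motion $W^j$ in place of $W$, with initial value $X_0^j=X_0$. Its measure argument is the law $\cL_{\hat{X}_{kh}^{j,h}}$ of the particle itself, so there is no interaction between particles. Consequently $\hat{X}_{kh}^{j,h}$ has the same law as $\hat{X}_{kh}$ for every $k$ and $j$, and therefore $\mE|\hat{X}_{kh}^{j,h}|^2=\mE|\hat{X}_{kh}|^2$. The bound then follows immediately from Lemma \ref{lem-1}, with the same $h^*$ and the same $C_1$, uniformly in $j$.

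To keep the argument self-contained, I would also redo the one-step estimate directly. Square (\ref{c-2}) and take expectations. The cross terms containing $\Delta W^j_{(k+1)h}$ vanish, because this increment is independent of $\cF_{kh}$ and has mean zero. This leaves
$$
\mE|\hat{X}_{(k+1)h}^{j,h}|^2=\mE|\hat{X}_{kh}^{j,h}|^2+h^2\mE|b|^2+h\mE\|\s\|_{\mathrm{HS}}^2+2h\mE\langle \hat{X}_{kh}^{j,h},b\rangle,
$$
where $b$ and $\s$ are evaluated at $(\hat{X}_{kh}^{j,h},\cL_{\hat{X}_{kh}^{j,h}})$. For the drift and diffusion terms, use the second inequality of Assumption 2, noting $\|\s\|^2\le(1+\rho)\|\s\|^2$. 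For the $h^2$ term, use the growth bound of Assumption 3. Since $\cL_{\hat{X}_{kh}^{j,h}}(|\cdot|^2)=\mE|\hat{X}_{kh}^{j,h}|^2$, this gives the same recursion as before,
$$
\mE|\hat{X}_{(k+1)h}^{j,h}|^2\le A_1\mE|\hat{X}_{kh}^{j,h}|^2+A_2,
$$
with $A_1$ and $A_2$ exactly as in Lemma \ref{lem-1}.

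Choosing $h<h^*$ ensures $A_1\in(0,1)$. Iterating the recursion then gives $\mE|\hat{X}_{kh}^{j,h}|^2\le \mE|X_0|^2+A_2/(1-A_1)\le C_1$. This bound depends on neither $k$, $h$ nor $j$, so the supremum over $j$ costs nothing.

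The only point requiring care is the role of the measure argument. It is the law of the particle itself, not an empirical measure, so its second moment equals $\mE|\hat{X}_{kh}^{j,h}|^2$ exactly. This is what makes the $\kappa\,\mu(|\cdot|^2)$ term absorbable into the contraction, producing the factor $1-(\gamma-\kappa)h$ in $A_1$. If this identification holds, I expect no real obstacle beyond that.
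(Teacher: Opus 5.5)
Your proposal is correct and is essentially what the paper intends: the paper gives no separate proof of this lemma, because each non-interacting particle in (\ref{c-2}) is a copy of the chain (\ref{a-2}), so the recursion $\mE|\hat{X}_{(k+1)h}^{j,h}|^2\le A_1\mE|\hat{X}_{kh}^{j,h}|^2+A_2$ from Lemma \ref{lem-1} carries over verbatim with the same $h^*$ and $C_1$. Your observation that the measure argument is the particle's own law, so that $\cL_{\hat{X}_{kh}^{j,h}}(|\cdot|^2)=\mE|\hat{X}_{kh}^{j,h}|^2$, is exactly what makes this reduction work.
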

\begin{lem}\label{lem-4} Under $\mathbf{Assumptions}$ $\mathbf{2}$-$\mathbf{4}$, there exists $h^{**}\in(0,1)$  with the following property: for any step size  $h\in(0,h^{**})$ and any two initial values $\hat{X}_{0}^{N,j,h}=x$, $\hat{Y}_{0}^{N,j,h}=y$, the EM scheme $\{\hat{X}_{kh}^{N,j,h}\}
$ and $\{\hat{Y}_{kh}^{N,j,h}\}$ defined by scheme (\ref{b-2}) satisfy, for $k=1,2,\cdots,\lfloor\frac{T}{h}\rfloor$ and $j=1,2,\cdots,N$,
$$
\mE\left|\hat{X}_{kh}^{N,j,h}-\hat{Y}_{kh}^{N,j,h}\right|^2\leq\mE|x-y|^2e^{-\xi_1 hk},
$$
where $\xi_1=\alpha-\beta-(a+b)h^{**}>0$.
\end{lem}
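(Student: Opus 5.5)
We will follow the same one-step contraction argument as in Lemma \ref{lem-2}, now run on the whole particle system rather than on a single particle. We start both copies of the scheme (\ref{b-2}) from the given initial data and drive them with the same Brownian motions $W^1,\dots,W^N$ (synchronous coupling). We write $e_k^j:=\hat{X}_{kh}^{N,j,h}-\hat{Y}_{kh}^{N,j,h}$ and denote by $\hat\mu_k^{N}$ and $\hat\nu_k^{N}$ the two empirical measures at time $kh$. Subtracting the two recursions gives
$$
e_{k+1}^j=e_k^j+h\big(b(\hat{X}_{kh}^{N,j,h},\hat\mu_k^N)-b(\hat{Y}_{kh}^{N,j,h},\hat\nu_k^N)\big)+\big(\s(\hat{X}_{kh}^{N,j,h},\hat\mu_k^N)-\s(\hat{Y}_{kh}^{N,j,h},\hat\nu_k^N)\big)\Delta W^j_{(k+1)h}.
$$
We square this and take expectations. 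The cross terms involving $\Delta W^j_{(k+1)h}$ vanish, because the increment is independent of $\cF_{kh}$ and has mean zero, exactly as in Lemma \ref{lem-1}. What remains is $\mE|e_k^j|^2$, the monotone pair $2h\langle \Delta b,e_k^j\rangle+h\|\Delta\s\|_{\mathrm{HS}}^2$, and the quadratic term $h^2|\Delta b|^2$.

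Next we apply Assumption 2 to the monotone pair and Assumption 3 to the $h^2$ term. This yields
$$
\mE|e_{k+1}^j|^2\le (1-\a h+ah^2)\mE|e_k^j|^2+(\b h+bh^2)\,\mE\,\cW_2(\hat\mu_k^N,\hat\nu_k^N)^2 .
$$
To control the measure term, we use the coupling $\frac1N\sum_i\d_{(\hat{X}^{N,i,h}_{kh},\hat{Y}^{N,i,h}_{kh})}$ between the two empirical measures. It gives the pathwise bound $\cW_2(\hat\mu_k^N,\hat\nu_k^N)^2\le \frac1N\sum_{i=1}^N|e_k^i|^2$.

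This bound is the main obstacle: the measure term couples all particles, so the recursion does not close for a single $j$. Two routes are available. The first is to average over $j$ and set $E_k:=\frac1N\sum_j\mE|e_k^j|^2$, which gives
$$
E_{k+1}\le\big(1-(\a-\b)h+(a+b)h^2\big)E_k=A_3E_k .
$$
The second, which gives the per-particle statement directly, applies when the initial data are exchangeable (for instance identical across $j$, as in (\ref{b-2}) where every particle starts from $X_0$). In that case the law of the system is symmetric in the particle index, so $\mE|e_k^i|^2=\mE|e_k^j|^2$ for all $i,j$. The averaged bound then coincides with the per-particle bound, and the same inequality holds for $\mE|e_k^j|^2$ itself. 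We will use exchangeability to pass from $E_k$ to each $j$. In the non-exchangeable case we would state the conclusion for $E_k$, which is the quantity actually needed for the invariant-measure argument.

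Finally, we take the same $h^{**}$ as in Lemma \ref{lem-2}, so that $A_3\in(0,1)$ for $h<h^{**}$. Iterating gives $\mE|e_k^j|^2\le A_3^k\,\mE|x-y|^2$. Using $A_3\le e^{-(1-A_3)}$ and the bound $1-A_3=(\a-\b)h-(a+b)h^2\ge \xi_1 h$ for $h<h^{**}$, we obtain $A_3^k\le e^{-\xi_1hk}$, which is the claimed estimate. Assumption 4 is used only to keep the constants consistent with Lemma \ref{lem-2}.
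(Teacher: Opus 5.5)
Your proposal is correct and follows the paper's proof. The steps match: the one-step expansion, Assumption 2 on the monotone pair and Assumption 3 on the $h^2$ term, the empirical-coupling bound $\cW_2(\hat\mu_k^N,\hat\nu_k^N)^2\le \frac1N\sum_{i=1}^N|e_k^i|^2$, symmetry in the particle index to close the recursion with $A_3=1-(\alpha-\beta)h+(a+b)h^2$, and iteration as in Lemma \ref{lem-2}. Your exchangeability argument (identical initial data and i.i.d. Brownian motions under synchronous coupling) and the bound $1-A_3\ge \xi_1 h$ for $h<h^{**}$ make precise the steps the paper only summarizes as ``using symmetry'' and ``$a^k<e^{-(1-a)k}$''.
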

\begin{proof} Following the approach of Lemma \ref{lem-2} and by $\mathbf{Assumptions}$ $\mathbf{2}$-$\mathbf{3}$, one can see that
$$
\aligned
&\mE\left|\hat{X}_{(k+1)h}^{N,j,h}-\hat{Y}_{(k+1)h}^{N,j,h}\right|^2\\
\leq &(1-\a h+ah^2)\mE\left|\hat{X}_{kh}^{N,j,h}-\hat{Y}_{kh}^{N,j,h}\right|^2+(\b h+bh^2)\mE\left[\cW_2\left(\frac{1}{N}\sum_{i=1}^N\d_{\hat{X}_{kh}^{N,i,h}},\frac{1}{N}\sum_{i=1}^N\d_{\hat{Y}_{kh}^{N,i,h}}\right)^2\right].
\endaligned
$$
A key property of the Wasserstein distance for empirical measures is that
$$
\mE\left[\cW_2\left(\frac{1}{N}\sum_{i=1}^N\d_{\hat{X}_{kh}^{N,i,h}},\frac{1}{N}\sum_{i=1}^N\d_{\hat{Y}_{kh}^{N,i,h}}\right)^2\right]\leq \frac{1}{N}\sum_{i=1}^N\mE\left(\hat{X}_{kh}^{N,i,h}-\hat{Y}_{kh}^{N,i,h}\right)^2.
$$
Using symmetry, summing over $j$ yields:
$$
\mE\left|\hat{X}_{(k+1)h}^{N,j,h}-\hat{Y}_{(k+1)h}^{N,j,h}\right|^2\leq (1-(\a-\b) h+(a+b)h^2)\mE\left|\hat{X}_{kh}^{N,j,h}-\hat{Y}_{kh}^{N,j,h}\right|^2.
$$
The remainder of the proof follows exactly as in Lemma \ref{lem-2}, leading to the desired exponential decay estimate.
\end{proof}

\subsection{The existence and uniqueness of the numerical invariant measure for particle systems}
\begin{proof}[Proofs of Theorem \ref{thm-3}]By a straightforward calculation, we obtain
$$
\mE|\hat{X}_{(k+1)h}^{N,j,h}-\hat{X}_{(k+1)h}^{j,h}|^2
\leq \mE|\hat{X}_{kh}^{N,j,h}-\hat{X}_{kh}^{j,h}|^2+\cC_1+\cC_2,
$$
where
$$
\cC_1:=h^2\mE\left[\left|b\left(\hat{X}_{kh}^{N,j,h},\frac{1}{N}\sum_{i=1}^N\d_{\hat{X}_{kh}^{N,i,h}}
\right)-b\left(\hat{X}_{kh}^{j,h},
\cL_{\hat{X}_{kh}^{j,h}}\right)\right|^2\right],
$$
$$
\aligned
\cC_2:=&
2h\mE\left\langle b\left(\hat{X}_{kh}^{N,j,h},\frac{1}{N}\sum_{i=1}^N\d_{\hat{X}_{kh}^{N,i,h}}
\right)-b\left(\hat{X}_{kh}^{j,h},
\cL_{\hat{X}_{kh}^{j,h}}\right), \hat{X}_{kh}^{N,j,h}-\hat{X}_{kh}^{j,h} \right\rangle\\
&+h\mE\left[\left\|\s\left(\hat{X}_{kh}^{N,j,h},\frac{1}{N}\sum_{i=1}^N\d_{\hat{X}_{kh}^{N,i,h}}\right)-\s\left(\hat{X}_{kh}^{j,h},\cL_{\hat{X}_{kh}^{j,h}}\right)\right\|^2_{\mathrm{HS}}\right].\\
\endaligned
$$
$\mathbf{Estimate}$ $\mathbf{of}$ $\mathbf{\cC_1}$. It can be readily verified that
$$
\aligned
\cC_1\leq &2h^2\mE\left[\left|b\left(\hat{X}_{kh}^{N,j,h},\frac{1}{N}\sum_{i=1}^N\d_{\hat{X}_{kh}^{N,i,h}}\right)-b\left(\hat{X}_{kh}^{j,h},
\frac{1}{N}\sum_{i=1}^N\d_{\hat{X}_{kh}^{i,h}}\right)\right|^2\right]\\
&+2h^2\mE\left[\left|b\left(\hat{X}_{kh}^{j,h},
\frac{1}{N}\sum_{i=1}^N\d_{\hat{X}_{kh}^{i,h}}\right)-b\left(\hat{X}_{kh}^{j,h},
\cL_{\hat{X}_{kh}^{j,h}}\right)\right|^2\right]:=2h^2(\cC_{1,1}+\cC_{1,2}).
\endaligned
$$

For $\cC_{1,1}$, by $\mathbf{Assumption}$ $\mathbf{3}$ and the Cauchy-Schwarz inequality,
$$
\aligned
\cC_{1,1}=&\frac{1}{N^2}\mE\left[\sum_{i=1}^N\left(b\left(\hat{X}_{kh}^{N,j,h},\d_{\hat{X}_{kh}^{N,i,h}}\right)-b
\left(\hat{X}_{kh}^{j,h},\d_{\hat{X}_{kh}^{i,h}}\right)\right)\right]^2\\
\leq &\frac{1}{N}\sum_{i=1}^N\mE\left[\left|b\left(\hat{X}_{kh}^{N,j,h},\d_{\hat{X}_{kh}^{N,i,h}}\right)-b
\left(\hat{X}_{kh}^{j,h},\d_{\hat{X}_{kh}^{i,h}}\right)\right|^2\right]\leq (a+b) \mE|\hat{X}_{kh}^{N,j,h}-\hat{X}_{kh}^{j,h}|^2,\\
\endaligned
$$
where the last inequality holds by summing over $j$ on both sides, using the symmetry, and removing the summation symbol.

For $\cC_{1,2}$,
denoting $Z_s^{h,i,j}=b\left(\hat{X}_s^{j,h},\d_{\hat{X}_s^{i,h}}\right)-b\left(\hat{X}_s^{j,h},\cL_{\hat{X}_s^{j,h}}\right)$, and by the independence of $\{\hat{X}_{\cdot}^{j,h}\}_{j=1}^N$,
for any $i\neq j\neq l$, we have $\mE\langle Z_{kh}^{h,i,j},Z_{kh}^{h,l,j}\rangle=0$. Therefore, there exists a positive constant $\tilde{C}_1$, such that
$$
\aligned
\cC_{1,2}=&\frac{1}{N^2}\mE\left[\left|\sum_{i=1}^NZ_{kh}^{h,i,j}\right|^2\right]=\frac{1}{N^2}\sum_{j,l=1}^N\mE\langle Z_{kh}^{h,i,j},Z_{kh}^{h,l,j}\rangle \\
\leq & \frac{1}{N^2} \left(2\sum_{l=1}^N\mE\langle Z_{kh}^{h,j,j},Z_{kh}^{h,l,j}\rangle+\sum_{i=1}^N\mE|Z_{kh}^{h,i,j}|^2\right)
\leq \frac{2b^2\mE|\hat{X}_{kh}^{j,h}|^2}{N}:=\tilde{C}_1\frac{1}{N},\\
\endaligned
$$
where the last inequality holds using Lemma \ref{lem-5}. Hence, combining the estimates for $\cC_{1,1}$ and $\cC_{1,2}$, we obtain
$$
\cC_1\leq 2(a+b)h^2\mE|\hat{X}_{kh}^{N,j,h}-\hat{X}_{kh}^{j,h}|^2+\tilde{C}_1\frac{2h^2}{N}.
$$
$\mathbf{Estimate}$ $\mathbf{of}$ $\mathbf{\cC_2}$.
Applying the same method shows that
\beq\label{4}
\cC_{2}\leq -\a h\mE|\hat{X}_{kh}^{N,j,h}-\hat{X}_{kh}^{j,h}|^2+\b h\mE\left[\cW_2\left(\frac{1}{N}\sum_{i=1}^N\d_{\hat{X}_{kh}^{N,i,h}},\cL_{\hat{X}_{kh}^{j,h}}\right)^2\right],
\deq
where
$$
\aligned
&\mE\left[\cW_2\left(\frac{1}{N}\sum_{i=1}^N\d_{\hat{X}_{kh}^{N,i,h}},\cL_{\hat{X}_{kh}^{j,h}}\right)^2\right]\\ \leq & \mE\left[\cW_2\left(\frac{1}{N}\sum_{i=1}^N\d_{\hat{X}_{kh}^{N,i,h}},\frac{1}{N}\sum_{i=1}^N\d_{\hat{X}_{kh}^{i,h}}\right)^2\right]+
\mE\left[\cW_2\left(\frac{1}{N}\sum_{i=1}^N\d_{\hat{X}_{kh}^{i,h}},\cL_{\hat{X}_{kh}^{j,h}}\right)^2\right].\\
:=&\cC_{2,1}+\cC_{2,2}.\\
\endaligned
$$
For the term $\cC_{2,1}$, we have
$$
\cC_{2,1}\leq \frac{1}{N}\sum_{i=1}^{N}\mE\left|\hat{X}_{kh}^{N,i,h}-\hat{X}_{kh}^{i,h}\right|^2.
$$
For the term $\cC_{2,2}$, using a method similar to $\cC_{1,2}$, for some constant $\tilde{C}_2>0$, it follows that
$$
\cC_{2,2}\leq \mE\left|\frac{1}{N}\sum_{i=1}^N\hat{X}_{kh}^{i,h}-\hat{X}_{kh}^{j,h}\right|^2\leq \frac{1}{N^2}\sum_{i,l=1}^N\mE\langle\hat{X}_{kh}^{i,h}-\hat{X}_{kh}^{j,h},\hat{X}_{kh}^{l,h}-\hat{X}_{kh}^{j,h}\rangle\leq \frac{2\mE|\hat{X}_{kh}^{j,h}|^2}{N}:=\tilde{C}_2\frac{1}{N}.
$$
Inserting the above relations into (\ref{4}), summing over $j$ on both sides, using the symmetry, and removing the summation symbol, for a constant $\tilde{C}_3$, then
$$
\cC_{2}\leq -(\a-\b)h\mE|\hat{X}_{kh}^{N,j,h}-\hat{X}_{kh}^{j,h}|^2+\tilde{C}_3\frac{h}{N}.
$$
Thus, there exists a constant $\tilde{C}_4$ such that
$$
\sup_{i=1,2,\cdots,N}\mE|\hat{X}_{(k+1)h}^{N,j,h}-\hat{X}_{(k+1)h}^{j,h}|^2\leq (1-(\a-\b)h+(a+b)h^2) \mE|\hat{X}_{kh}^{N,j,h}-\hat{X}_{kh}^{j,h}|^2+\tilde{C}_4\frac{h}{N} .
$$
Using the fact that
$$
\sup_{j=1,2,\cdots,N}\mE\left |X_{0}^{N,j,h}-X_0^{j,h}\right|^2=0.
$$
Thus, choosing a sufficient small $h^{\sharp}$, for $h\in(0,h^{\sharp})$, it follows that
$$
\sup_{j=1,2,\cdots,N}\mE\left[|\hat{X}_{kh}^{N,j,h}-\hat{X}_{kh}^{j,h}|^2\right]\preceq \frac{h}{N}.
$$
This completes the proof.
\end{proof}
\begin{proof}[The proof of Theorem \ref{thm-4}.] We prove the two assertions separately.

\textbf{(1) Existence and uniqueness of the invariant measure for the particle system.}

 By applying the method of Theorem \ref{thm-2}, we find that $\hat{\pi}_h^N$ is the unique invariant measure for the semigroup ${\delta_x\hat{\mP}_{kh}^{N,j}}$, where $\hat{\mP}_{kh}^{N,j}(x,B)$ is the transition probability kernel of $\hat{X}_{kh}^{N,j,h}$, for $j=1,2,\cdots,N$ and $k=0,1,2,\cdots, \lfloor\frac{T}{h}\rfloor$.

 \textbf{(2) Approximation error between the particle system's invariant measure and the true invariant measure.}

 For any $j=1,2,\cdots,N$, let $\hat{\mP}_{kh}^j(x,B)$ be the transition probability kernel of the non-interacting particle system $\{\hat{X}_{kh}^{j,h}\}$ defined by \eqref{c-2}. Then,
by the triangle inequality, we can get
$$
\cW_2(\hat{\pi}_h^N,\pi)^2\leq \cW_2(\hat{\pi}_h^N,\d_{\hat{\mP}_{kh}^{N,j}})^2+\cW_2(\d_{\hat{\mP}_{kh}^{N,j}},\d_{\hat{\mP}_{kh}^j})^2+\cW_2(\pi,\d_{\hat{\mP}_{kh}^j})^2.
$$
According to $\mathbf{(i)}$ in Theorem \ref{thm-2} and Theorem \ref{thm-4}, the convergence of $\hat{\mP}_{kh}^{N,j}$ to $\hat{\pi}_h^N$ and $\hat{\mP}_{kh}^j$ to $\pi$ as $h\rightarrow 0$, yields
$$
\cW_2(\hat{\pi}_h^N,\d_{\hat{\mP}_{kh}^{N,j}})^2\leq Ce^{-\xi_1hk},\ \  \cW_2(\pi,\d_{\hat{\mP}_{kh}^j})^2\leq  Ce^{-\xi_2hk},
$$
where $\xi_1$ and $\xi_2$ are positive constants.

For any step size  $h\in(0,h^{\sharp})$, one can choose $k$ sufficiently large, we arrive at
$$
\cW_2(\hat{\pi}_h^N,\d_{\hat{\mP}_{kh}^{N,j}})^2\preceq h,\ \  \cW_2(\pi,\d_{\hat{\mP}_{kh}^j})^2\preceq h.
$$
Furthermore, for the same $k$, using Theorem \ref{thm-3}, it is straightforward to obtain that
$$\cW_2(\d_{\hat{\mP}_{kh}^{N,j}},\d_{\hat{\mP}_{kh}^j})^2\leq \mE|X_{kh}^{N,j,h}-\hat{X}_{kh}^{j,h}|^2\preceq \frac{h}{N},
$$
which is equivalent to the statement $\cW_2(\d_{\hat{\mP}_{kh}^{N,j}},\d_{\hat{\mP}_{kh}^j})\preceq h^{1/2}$ in the limit $N\rightarrow \infty$.
This completes the proof of Theorem \ref{thm-4}.
\end{proof}
\section{Numerical examples}
This section presents numerical simulations to illustrate the theoretical results established in the previous sections. We consider a one-dimensional linear McKean-Vlasov SDE that satisfies the monotonicity condition. Meanwhile, we examine the behavior of the EM scheme, the interacting particle system, and their invariant measures.
\begin{exa} Consider the following linear McKean-Vlasov SDE:
$$
\dif X_t=(-\lambda X_t+\theta \mE X_t)\dif t+\s_0\dif W_t.
$$
with parameters $\lambda=1.2$, $\theta=0.4$ and $\s_0=1$. It is straightforward to confirm that  this equation satisfies $\mathbf{Assumptions}$ $\mathbf{1}$-$\mathbf{4}$, ensuring the existence and uniqueness of an invariant measure for both the continuous-time process and its EM scheme \eqref{a-2}.

\textbf{(1) Existence and uniqueness of the invariant measure.}

To verify the existence of an invariant measure, we simulate $10000$ sample paths with the initial value of $X_0 = 6$ and step size of $h = 0.01$. The left panel of Figure 1 plots the resulting empirical density functions at several time points. While the densities at $t = 0.1$, $0.3$, and $0.5$ differ significantly, those at $t = 4$ and $t = 8$ align closely. This supports the existence of an invariant measure.

To demonstrate the uniqueness,
the right panel compares empirical densities at time $t = 30$ for three different initial values: $X_0=-6, 6, 16$. Their remarkable similarity provides strong evidence for the uniqueness of the invariant measure.
\begin{figure}[htbp]\label{F-1}
\centering
\includegraphics[scale=0.5]{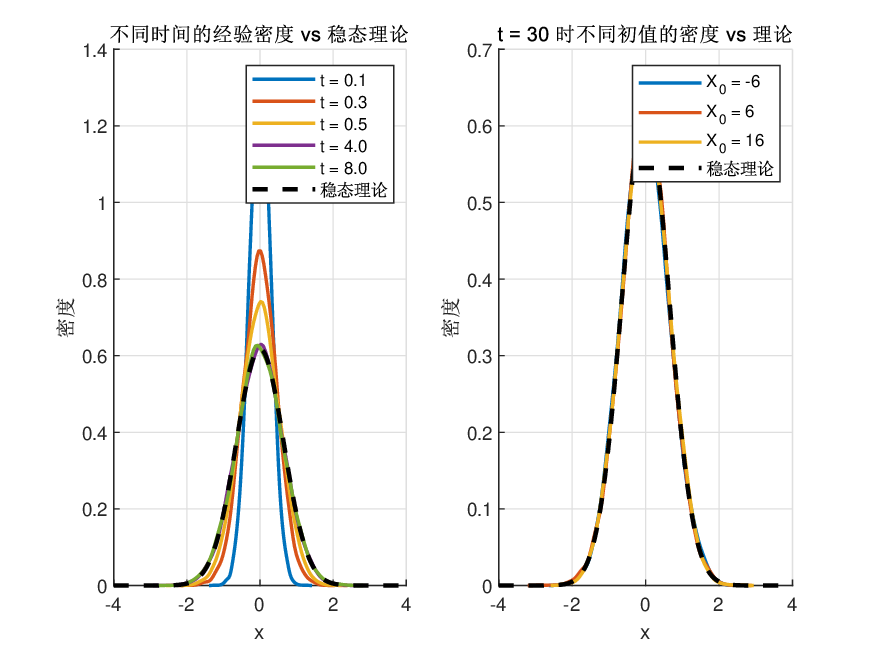}
\caption{Left: Empirical density functions at different time points. Right: Empirical density functions at t=30 with different initial values.}
\label{figure}
\end{figure}

\textbf{(2) Strong convergence of the EM scheme.}

Figure 2 illustrates the strong convergence rate of the EM scheme, confirming the theoretical result of Theorem \ref{thm-1}. A set of step sizes $ h = 0.04, 0.02, 0.01, 0.005, 0.0025$, along with a refined step size  $h_{ref}=e^{-4}$ are selected. Pathwise consistency is enforced through a multiscale algorithm based on coarse-fine grid comparisons. The log-log plot exhibits a slope of approximately $1/2$, consistent with the predicted $O(h^{1/2})$ convergence rate.

\begin{figure}[htbp]\label{F-2}
\centering
\includegraphics[scale=0.4]{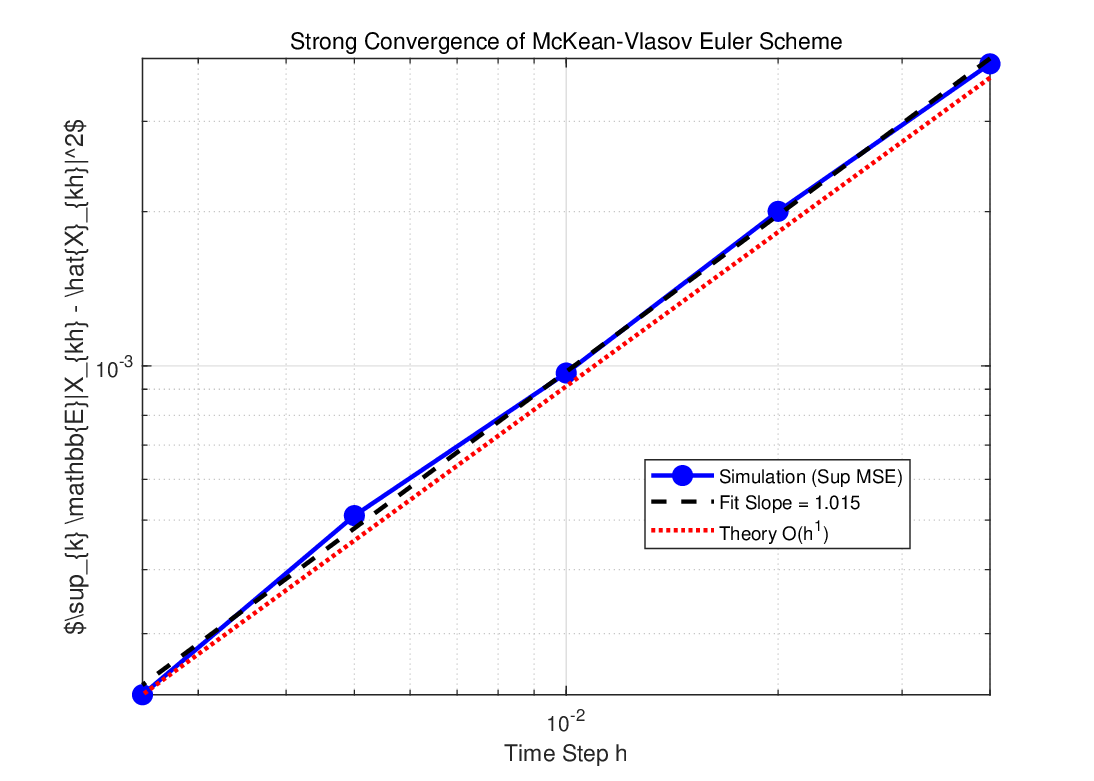}
\caption{Strong convergence of the Euler-Maruyama scheme.}
\label{figure}
\end{figure}

\textbf{(3) Convergence of the particle system.}

We provide numerical evidence for the convergence rate predicted in Theorem \ref{thm-3} which quantifies the relationship between the interacting particle system and its mean-field limit.

Figure 3 displays two complementary experiments: the convergence with respect to the number of particles
$N$ (Chaos error vs N, left panel) and the convergence with respect to the time step $h$ (Discretization error vs h, right panel).

In the left panel, the step size is fixed at
$h=0.001$, chosen sufficiently small to ensure that the discretization error is dominated by the chaos error. Particle counts
$N=50, 100, 200, 400, 800, 1600$
are selected in geometric progression to clearly exhibit the power-law scaling in log-log coordinates. The objective is to verify the theoretical $1/N$ decay, which corresponds to a line of slope $-1$ in the logarithmic plot.

In the right panel, the particle number is fixed at
$N=100$
 to isolate the discretization error, and the step sizes
$h=0.04,0.02,0.01,0.005,0.0025$
 are taken in a halving sequence. A reference solution computed with a refine step size
$h_{ref}=e^{-4}$ is used to ensure numerical accuracy. This setup allows us to confirm the theoretical
$h$-scaling, which should appear as a straight line of slope
$+1$ under logarithmic scaling.
\begin{figure}[htbp]\label{F-3}
\centering
\includegraphics[scale=0.4]{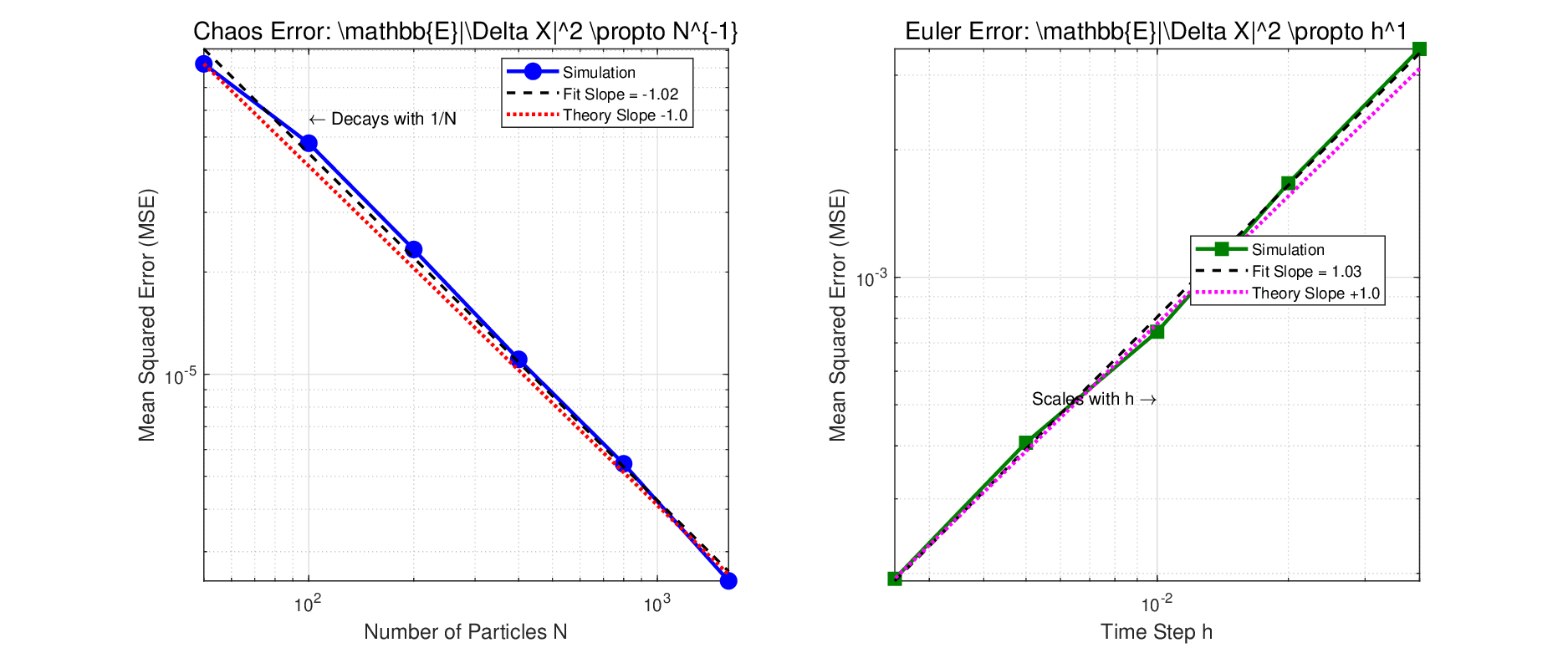}
\caption{Left: Chaos error vs N. Right: Discretization error vs h.}
\label{figure}
\end{figure}

\textbf{(4) Convergence to the invariant measure.}

In Figure 4, a simulation of the McKean-Vlasov SDEs is performed with $N = 1000$ particles, comparing evolutions from initial value $-3$ and $3$ at time points $t = 0, 0.2\tau, 0.5\tau$ (where $\tau$ is the characteristic relaxation time). At the final time, the empirical distributions from the two different initial conditions are indistinguishable, confirming that the invariant measure exists and is unique.
\begin{figure}[htbp]\label{F-4}
\centering
\includegraphics[scale=0.5]{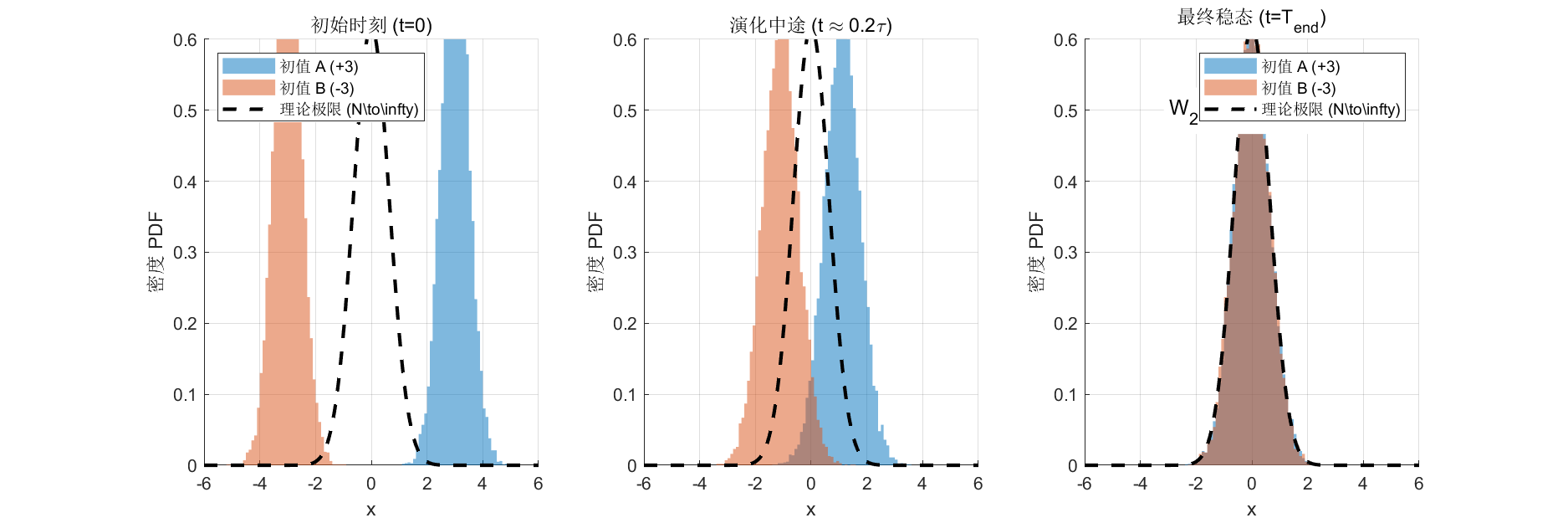}
\caption{Evolution of the particle system toward the invariant measure. Left: Initial configuration. Center: Intermediate stage.  Right: Steady state.}
\label{figure}
\end{figure}
\end{exa}
\section{Data Availability Statement}
All data, models, and code generated or used during the study appear in the submitted article.



\begin{thebibliography}{99}
\bibitem{BSY} J. Bao, M. Scheutzow, C. Yuan, Existence of invariant probability measures for functional McKean-Vlasov SDEs. {\it Electron. J. Probab.}, 27 (2022), Paper No. 43, 14 pp.
\bibitem{CD} W. Cao, K. Du, Empirical approximation to invariant measures of non-degenerate McKean-Vlasov dynamics. {\it Electron. J. Probab.}, 30 (2025), Paper No. 41, 22 pp.
\bibitem{DJL} K. Du, Y. Jiang, J. Li, Empirical approximation to invariant measures for McKean-Vlasov processes: mean-field interaction vs self-interaction. {\it Bernoulli}, 29 (2023), no. 3, 2492-2518.
 \bibitem{HRW} Z. Hao, C. Ren, M. Wu, Supercritical McKean-Vlasov SDE driven by cylindrical $\a$-stable process. {\it Avaiable at arXiv:2410.18611.}
 \bibitem{HWY} X. Huang, W. Shen, F.-F. Yang, Weak solution and invariant probability measure for McKean-Vlasov SDEs with integrable drifts. {\it J. Math. Anal. Appl.}, 537 (2024), no. 2, Paper No. 128318, 15 pp.
\bibitem{K} M. Kac, Foundations of kinetic theory. Proceedings of the Third Berkeley Symposium on Mathematical Statistics and Probability, 1954-1955, vol. III, pp. 171-197, Univ. California Press, Berkeley-Los Angeles, Calif., 1956.
\bibitem{LMW} W. Liu, X. Mao, Y. Wu,
The backward Euler-Maruyama method for invariant measures of stochastic differential equations with super-linear coefficients. {\it
Appl. Numer. Math.}, 184 (2023), 137-150.
\bibitem{M} H. P., Jr. McKean, A class of Markov processes associated with nonlinear parabolic equations.
{\it Proc. Nat. Acad. Sci. U.S.A.}, 56 (1966), 1907-1911.
\bibitem{P} G. A. Pavliotis, Stochastic processes and applications. Diffusion processes, the Fokker-Planck and Langevin equations. {\it Texts Appl. Math.}, 60. Springer, New York, 2014. xiv+339 pp.
  \bibitem{RF} C. Ren\'e, D. Francois, Probabilistic theory of mean field games with applications. I.  Mean field FBSDEs, control, and games. {\it  Probab. Theory Stoch. Model.}, 83. Springer, Cham, 2018. xxv+713 pp.
\bibitem{RF2} C. Ren\'e, D. Francois, Probabilistic theory of mean field games with applications. II. Mean field games with common noise and master equations. {\it  Probab. Theory Stoch. Model.}, 84. Springer, Cham, 2018. xxiv+697 pp.
\bibitem{S} A.-S. Sznitman, Topics in propagation of chaos. \'Ecole $\mathrm{d}'$\'Et\'e de Probabilit\'es de Saint-Flour XIX-1989, 165-251, {\it Lecture Notes in Math., 1464}, Springer, Berlin, 1991.
\bibitem{SKP} L. Sharrock, N. Kantas, P. Parpas, G. A. Pavliotis,
Online parameter estimation for the McKean-Vlasov stochastic differential equation.
{\it Stochastic Process. Appl.}, 162 (2023), 481-546.
\bibitem{W} F.-Y. Wang, Distribution dependent SDEs for Landau type equations. {\it Stochastic Process. Appl.}, 128 (2018), no. 2, 595-621.
\bibitem{W1} F.-Y. Wang,
Exponential ergodicity for singular reflecting McKean-Vlasov SDEs.
{\it Stoch. Process. Appl.}, 160 (2023), pp. 265-293.
\bibitem{We}L. Weng, W. Liu,
Invariant measures of the Milstein method for stochastic differential equations with commutative noise. {\it
Appl. Math. Comput.}, 358 (2019), 169-176.
\bibitem{Zh} S.-Q. Zhang.
Existence and non-uniqueness of stationary distributions for distribution dependent SDEs. {\it
Electron. J. Probab.}, 28 (2023), Paper No. 93, 34 pp.
\bibitem{Zha} X. Zhang, A discretized version of Krylov's estimate and its applications. {\it Electron. J. Probab.}, 24 (2019), Paper No. 131, 17 pp.
\end{thebibliography}
\end{document}